\newtheorem{theorem}{Theorem}[section]
\newtheorem{lemma}[theorem]{Lemma}
\newtheorem{corollary}[theorem]{Corollary}
\newtheorem{proposition}[theorem]{Proposition}
\theoremstyle{remark}
\newtheorem{remark}{Remark}
\newtheorem{example}{Example}
\newtheorem{algorithm}{Algorithm}{\bf}{\it}
\def\xdag{x^\dagger}
\def\ydel{y^\delta}
\def\R{\mathbb{R}}
\def\N{\mathbb{N}}
\begin{document}
\title{Computing quasisolutions of nonlinear inverse problems via efficient minimization of trust region problems}
\author{Barbara Kaltenbacher and Franz Rendl and Elena Resmerita\\
Institute of Mathematics, Alpen-Adria-Universit\"at Klagenfurt}
\maketitle
\begin{abstract}
In this paper we present a method for the regularized solution of nonlinear inverse problems, based on Ivanov regularization (also called method of quasi solutions or constrained least squares regularization). This leads to the minimization of a non-convex cost function under a norm constraint, where non-convexity is caused by nonlinearity of the inverse problem. Minimization is done by iterative approximation, using (non-convex) quadratic Taylor expansions of the cost function. This leads to repeated solution of quadratic trust region subproblems with possibly indefinite Hessian. Thus the key step of the method consists in application of an efficient method for solving such quadratic subproblems, developed by Rendl and Wolkowicz \cite{RendlWolkowicz97}. We here present a convergence analysis of the overall method as well as numerical experiments. 
\end{abstract}
\section{Introduction}
Consider the nonlinear inverse problem of recovering $x\in X$ in 
\begin{equation}\label{Fxy}
F(x)=y
\end{equation}
-- with a forward operator $F:\mathcal{D}(F)\subseteq X\to Y$ mapping from a Hilbert space $X$ to a Banach space $Y$ -- from noisy measurements $\ydel$ of $y\in Y$ satisfying 
\begin{equation}\label{delta}
\mathcal{S}(y,\ydel)\leq\delta
\end{equation}
where $\mathcal{S}:y^2\to\R^+$ is a distance measure quantifying the data misfit, for instance a norm but possibly also a more involved expression such as the Kullback Leibler divergence arising from certain statistic measurement noise models. 

The regularized solution of \eqref{Fxy} via the method of quasi solutions (also called Ivanov regularization) leads to minimization problems of the form
\begin{equation}\label{Ivanov}
x_\rho^\delta\in\mbox{argmin}\{ r^\delta(x)\, : \, x\in X\,, \ \|x\|^2\leq \rho\}\,,
\end{equation}
where 
\[
r^\delta(x)=\mathcal{S}(F(x),\ydel)
\]
with an appropriately chosen radius $\rho$ that plays the role of a regularization parameter here, cf., 
\cite{GrodzevichWolkowicz09, Ivanov62, Ivanov63, IvanovVasinTanana02, LorenzWorliczek13, NeubauerRamlau14, SeidmanVogel89,Vogel90}.
The selection of $\rho$ can be done in an a priori fashion if the norm of some exact solution $\xdag$ to \eqref{Fxy} is known. Namely, in that case the choice $\rho=\|\xdag\|^2$ can shown to be optimal. Otherwise, also a noise level dependent a posteriori choice of $\delta$ according to Morozov's discrepancy principle, i.e., $\rho=\rho(\delta)$ such that
\begin{equation}\label{discrprinc}
\delta < r^\delta(x_\rho) \leq \tau\delta
\end{equation}
for some $\tau>1$ fixed independent of the noise level $\delta$, leads to convergence, cf. \cite{CKK15}.

In this paper we wish to exploit the obvious relation to trust region subproblems to take advantage of an efficient algorithm proposed in \cite{RendlWolkowicz97} for solving quadratic trust region subproblems with not necessarily positive semidefinite Hessian -- a situation that is highly relevant here in view of the fact that the cost functional in \eqref{Ivanov} exhibits potential nonconvexity due to nonlinearity of the forward operator $F$ and/or the distance measure $\mathcal{S}$.

To this end, we first of all discretize the problem by restriction of minimization to finite dimensional subspaces $X_n\subseteq X$ and the use of computational approximations of the involved operators and norms, which leads to a sequence of finite dimensional problems
\begin{equation}\label{Ivanov_n}
x_{n,\rho}^\delta\in\mbox{argmin}\{ r_n^\delta(x_n)\, : \, x_n\in X_n\,, \ \|x_n\|_n^2\leq \rho\}
\end{equation}
where 
\[
r_n^\delta(x_n)=\mathcal{S}_n(F_n(x_n),\ydel)\,,
\]
where $\mathcal{S}_n:Y^2\to\R$  $F_n:X_n\to Y$ are approximations, e.g., due to discratization, of $\mathcal{S}$ and $F$, respectively.

We also consider the practically relevant situation that these minimization problems are not solved with infinite precision but in an inexact sense, i.e., we will use regularized approximations $\hat{x}_{n,\rho}^\delta\in X_n$ satisfying
\begin{equation}\label{Ivanov_n_eta}
\|\hat{x}_{n,\rho}^\delta\|_n^2\leq \rho\mbox{ and }
r_n^\delta(\hat{x}_{n,\rho}^\delta)
\leq \min_{x_n\in X_n} \{r_n^\delta(x_n)\ : \ \|x_n\|_n^2\leq \rho\}+\eta_n^\delta
\end{equation}
and choose $\rho=\rho(\delta)$ such that 
\begin{equation}\label{discrprinc_n_eta}
\delta +\underline{\eta}_n^\delta < r_n^\delta(\hat{x}_{n,\rho}^\delta) \leq \tau\delta +\overline{\eta}_n^\delta
\end{equation}
with appropriately chosen tolerances $\eta_n^\delta$, $\underline{\eta}_n^\delta$, $\overline{\eta}_n^\delta$.
A first requirement on these tolerances to admit solutions to \eqref{Ivanov_n_eta} and \eqref{discrprinc_n_eta} is
\begin{equation}\label{eta1}
\eta_n^\delta\geq0 \mbox{ and } \underline{\eta}_n^\delta < (\tau-1)\delta +\overline{\eta}_n^\delta
\end{equation}

We mention in passing that this nonconvex case is also of special interest for Ivanov regularization since this is also the situation in which it is in general not equivalent to Tikhonov regularization.

\section{Regularizing property of $\hat{x}_{n,\rho}^\delta$}\label{sec:reg}
To analyze convergence of $\hat{x}_{n,\rho}^\delta$ to an exact solution $\xdag$ of \eqref{Fxy} as $\delta\to0$, we first of all state a straightforward monotonicity property for the exact minimizers at fixed discretization level $n$.

\begin{lemma}\label{lem:mon}
The mapping $\rho\mapsto r_n^\delta(x_{n,\rho}^\delta)$ with $x_{n,\rho}^\delta$ according to \eqref{Ivanov_n} is monotonically decreasing.
\end{lemma}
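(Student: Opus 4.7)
The plan is to exploit the fact that the feasible sets of the constrained problems \eqref{Ivanov_n} are nested in $\rho$, and deduce monotonicity of the value function directly from this inclusion together with the definition of a minimizer.

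First, I would fix $\rho_1 \leq \rho_2$ and consider the associated feasible sets $B_i := \{x_n \in X_n \,:\, \|x_n\|_n^2 \leq \rho_i\}$ for $i=1,2$. Clearly $B_1 \subseteq B_2$. Next, I would take a minimizer $x_{n,\rho_1}^\delta \in B_1$ (whose existence is presupposed by writing $x_{n,\rho_1}^\delta$ in \eqref{Ivanov_n}). By the inclusion $B_1 \subseteq B_2$, this point is also feasible for the $\rho_2$-problem. Then, by the defining minimization property of $x_{n,\rho_2}^\delta$ applied to the feasible point $x_{n,\rho_1}^\delta$, one immediately gets
\[
r_n^\delta(x_{n,\rho_2}^\delta) \leq r_n^\delta(x_{n,\rho_1}^\delta),
\]
which is exactly the claimed monotonicity.

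There is essentially no hard step here; the argument is just the standard monotonicity of the value function of a parametric optimization problem under an expanding feasible set. The only point that might deserve a comment is that the notation $x_{n,\rho}^\delta$ in \eqref{Ivanov_n} implicitly assumes existence of a minimizer for each $\rho$, and that if the minimizer is not unique, the statement should be read as independent of the particular choice of selector, since the inequality above holds for any feasible competitor on the right-hand side. I would mention this briefly to keep the statement unambiguous, but otherwise the proof is a one-liner.
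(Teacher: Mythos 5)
Your argument is correct and is precisely the paper's own reasoning (the paper simply states that the admissible set grows with $\rho$ while the cost function is fixed, so the assertion is obvious); you have merely written out the one-line comparison explicitly. Nothing further is needed.
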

\begin{proof}
Since the admissible set is larger for larger radius and we minimize the same cost function, the assertion is obvious.
\end{proof}

Moroever, we get a uniform bound on the radii chosen according to \eqref{discrprinc_n_eta} with $\hat{x}_{n,\rho}^\delta$ satisfying \eqref{Ivanov_n_eta}, provided the tolerances are chosen appropriately.
\begin{lemma}\label{lem:boundrho}
Let \eqref{eta1} and 
\begin{equation}\label{eta2}
\underline{\eta}_n^\delta -\eta_n^\delta\geq r_n^\delta(P_n\xdag)-r^\delta(\xdag) 
\end{equation}
hold, where $P_n$ is the orthogonal projection onto $X_n$.\\ 
Then $\rho(\delta)$ according to \eqref{discrprinc_n_eta} satisfies the estimate
\begin{equation}\label{eq:boundrho}
\rho(\delta)\leq \|P_n \xdag\|_n^2=:\rho_n^\dagger
\end{equation}
\end{lemma}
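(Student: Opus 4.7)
The plan is to derive a contradiction: assume $\rho(\delta)>\rho_n^\dagger:=\|P_n\xdag\|_n^2$ (or $\rho(\delta)=\rho_n^\dagger$) and show that this would force $r_n^\delta(\hat{x}_{n,\rho(\delta)}^\delta)\leq\delta+\underline{\eta}_n^\delta$, contradicting the strict lower bound in \eqref{discrprinc_n_eta}. The point is that at radius $\rho_n^\dagger$, the projected exact solution $P_n\xdag$ is itself admissible, giving us an a priori upper bound on the minimum which propagates to any larger radius via \eqref{Ivanov_n_eta}.

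Concretely, the key steps I would carry out in order are: (i) observe that $P_n\xdag$ lies in the feasible set $\{x_n\in X_n:\|x_n\|_n^2\leq\rho_n^\dagger\}$ by definition of $\rho_n^\dagger$; (ii) conclude that for any $\rho\geq\rho_n^\dagger$ the feasible set contains $P_n\xdag$, so by \eqref{Ivanov_n_eta}
\[
r_n^\delta(\hat{x}_{n,\rho}^\delta)\leq\min_{\|x_n\|_n^2\leq\rho}r_n^\delta(x_n)+\eta_n^\delta\leq r_n^\delta(P_n\xdag)+\eta_n^\delta;
\]
(iii) use the exact solution property $F(\xdag)=y$ together with \eqref{delta} to get $r^\delta(\xdag)=\mathcal{S}(y,\ydel)\leq\delta$; (iv) insert assumption \eqref{eta2}, i.e., $r_n^\delta(P_n\xdag)\leq r^\delta(\xdag)+\underline{\eta}_n^\delta-\eta_n^\delta$, to chain the bounds into
\[
r_n^\delta(\hat{x}_{n,\rho}^\delta)\leq r^\delta(\xdag)+\underline{\eta}_n^\delta\leq\delta+\underline{\eta}_n^\delta
\qquad\text{for all } \rho\geq\rho_n^\dagger.
\]

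Finally, applying this with $\rho=\rho(\delta)$ under the assumption $\rho(\delta)\geq\rho_n^\dagger$ contradicts the strict lower inequality in \eqref{discrprinc_n_eta}, so $\rho(\delta)<\rho_n^\dagger$, which in particular implies \eqref{eq:boundrho}. I do not expect a real obstacle here: approximate rather than exact minimization is not a problem because we never need monotonicity of $\rho\mapsto r_n^\delta(\hat{x}_{n,\rho}^\delta)$; the crucial role of $P_n\xdag$ as a fixed feasible comparison point bypasses this. The only subtlety worth flagging is that Lemma \ref{lem:mon}, which concerns exact minimizers, is not what is used; what is used is the weaker pointwise comparison $\min_{\|x_n\|_n^2\leq\rho}r_n^\delta\leq r_n^\delta(P_n\xdag)$ via containment of feasible sets, combined with the tolerance balance encoded in \eqref{eta2}.
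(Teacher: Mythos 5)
Your proof is correct and follows essentially the same chain of inequalities as the paper's: feasibility of $P_n\xdag$, the tolerance condition \eqref{eta2}, $r^\delta(\xdag)\leq\delta$ via \eqref{delta}, and the strict lower bound in \eqref{discrprinc_n_eta}. The only cosmetic difference is that you inline the nested-feasible-set argument instead of invoking the monotonicity of Lemma \ref{lem:mon} by contraposition, which incidentally gives the marginally stronger strict conclusion $\rho(\delta)<\rho_n^\dagger$.
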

\begin{proof}
By \eqref{Ivanov_n_eta} and \eqref{discrprinc_n_eta}, as well as \eqref{delta} we have, for $\rho=\rho(\delta)$,
\[
r_n^\delta(x_{n,\rho}^\delta)
\geq r_n^\delta(\hat{x}_{n,\rho}^\delta)-\eta_n^\delta
>\delta+\underline{\eta}_n^\delta-\eta_n^\delta
\geq r^\delta(\xdag)+\underline{\eta}_n^\delta-\eta_n^\delta
\geq r_n^\delta(P_n\xdag)
\geq r_n^\delta(x_{n,\rho_n^\dagger}^\delta)\,,
\]
where in the last two inequalities we have used \eqref{eta2} and minimality of $x_{n,\rho_n^\dagger}^\delta$ (together with feasibility of $P_n\xdag$ for the discretized problem). Thus the assertion follows by contraposition in Lemma \ref{lem:mon}
\end{proof}

To prove convergence and convergence rates, we will make use of two general results Propositions \ref{prop:conv} and \ref{prop:rates}, see also Theorems 2.4 and 2.7 in  \cite{CKK15} for a slightly different setting.
Here for some fixed maximal noise level $\bar{\delta}$, $(y^\delta)_{\delta\in(0,\bar{\delta}]}$ is a familiy of data satisfying \eqref{delta} and $(\tilde{x}^\delta)_{\delta\in(0,\bar{\delta}]}$ a family of regularized approximations (defined by \eqref{Ivanov_n_eta} with \eqref{discrprinc_n_eta} or by some other regularization method).

\begin{proposition}\label{prop:conv}
Let $F$ be weakly sequentially closed at $y$ in the sense that 
\begin{equation}\label{weakseqclosed}
\begin{aligned}
&\forall (y_k)_{k\in\N}, \ (x_k)_{k\in\N} : \quad \\
&(x_k\rightharpoonup x\mbox{ and }\mathcal{S}(F(x_k),y_k)\to0\mbox{ and } \mathcal{S}(y,y_k)\to0 \ \Rightarrow \ y\in\mathcal{D}(F) \mbox{ and } F(x)=y
\end{aligned}
\end{equation}
and let uniform boundedness
\begin{equation}\label{boundxtilde}
\exists \bar{\rho}>0\ \forall \delta\in(0,\bar{\delta}]: \quad \|\tilde{x}^\delta\|^2\leq\bar{\rho}\end{equation}
as well as convergence of the operator values in the sense of the distance measure $\mathcal{S}$
\begin{equation}\label{Fconv}
\mathcal{S}(F(\tilde{x}^\delta),\ydel)\to0\mbox{ as }\delta\to0
\end{equation}
hold.\\
Then there exists a weakly convergent subsequence of $(\tilde{x}^\delta)_{\delta\in(0,\bar{\delta}]}$ and the limit $x^*$ of every weakly convergent subsequence of $(\tilde{x}^\delta)_{\delta\in(0,\bar{\delta}]}$ satisfies \eqref{Fxy}.
\\
If for such a weakly convergent subsequence $(\tilde{x}^{\delta_k})_{k\in\N}$ with limit $x^*$, 
\eqref{boundxtilde} can be strengthened to 
\begin{equation}\label{limsupxtilde}
\limsup_{k\to\infty} \|\tilde{x}^{\delta_k}\|^2\leq\|x^*\|^2\,,
\end{equation}
then we even have strong convergence $\tilde{x}^{\delta_k}\to x^*$ as $k\to\infty$.
\\
If the solution $\xdag$ to \eqref{Fxy} is unique then $\tilde{x}^\delta\rightharpoonup\xdag$ as $\delta\to0$ under condition\eqref{boundxtilde} and $\tilde{x}^\delta\to\xdag$ as $\delta\to0$ under condition $\limsup_{\delta\to0} \|\tilde{x}^{\delta}\|^2\leq\|\xdag\|^2$.
\end{proposition}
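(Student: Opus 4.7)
The plan is to proceed along the standard four-step route for regularization convergence results, exploiting the structure of the Hilbert space $X$ at each stage.

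First, to obtain a weakly convergent subsequence, I would invoke \eqref{boundxtilde}: since $\{\tilde{x}^\delta\}_{\delta\in(0,\bar\delta]}$ lies in a bounded set of the Hilbert space $X$, the Banach--Alaoglu theorem (in its sequential form, valid because norm-bounded sets in Hilbert spaces are weakly sequentially compact) yields a sequence $\delta_k\to0$ and an $x^*\in X$ with $\tilde{x}^{\delta_k}\rightharpoonup x^*$.

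Next, to identify $x^*$ as a solution of \eqref{Fxy}, I would apply the weak sequential closedness hypothesis \eqref{weakseqclosed} with $x_k:=\tilde{x}^{\delta_k}$ and $y_k:=y^{\delta_k}$. The three premises of \eqref{weakseqclosed} are then respectively: weak convergence $x_k\rightharpoonup x^*$ (from Step 1); $\mathcal{S}(F(x_k),y_k)\to0$, which is exactly \eqref{Fconv} applied along the subsequence; and $\mathcal{S}(y,y_k)\to0$, which follows from \eqref{delta} since $\mathcal{S}(y,y^{\delta_k})\leq\delta_k\to0$. This gives $y\in\mathcal{D}(F)$ and $F(x^*)=y$.

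For the strong convergence assertion, I would combine the limsup condition \eqref{limsupxtilde} with weak lower semicontinuity of the Hilbert-space norm: $\|x^*\|^2\leq\liminf_{k\to\infty}\|\tilde{x}^{\delta_k}\|^2\leq\limsup_{k\to\infty}\|\tilde{x}^{\delta_k}\|^2\leq\|x^*\|^2$, so $\|\tilde{x}^{\delta_k}\|\to\|x^*\|$. Together with $\tilde{x}^{\delta_k}\rightharpoonup x^*$ this forces $\tilde{x}^{\delta_k}\to x^*$ strongly, via the identity $\|\tilde{x}^{\delta_k}-x^*\|^2=\|\tilde{x}^{\delta_k}\|^2-2\,\mathrm{Re}\langle \tilde{x}^{\delta_k},x^*\rangle+\|x^*\|^2\to 0$ (the Radon--Riesz property).

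Finally, the uniqueness addendum follows by a standard subsequence--subsequence argument: if $\xdag$ is the only solution to \eqref{Fxy}, then every weakly convergent subsequence of $(\tilde{x}^\delta)$ must have limit $\xdag$ by the first two steps; since $(\tilde{x}^\delta)$ is bounded and every subsequence admits a weakly convergent sub-subsequence all with the same limit $\xdag$, the whole net converges weakly to $\xdag$. The analogous argument in the strong topology, using the limsup condition on the full family, yields strong convergence. I do not anticipate a serious obstacle here; the only delicate point is to make sure that the hypotheses of \eqref{weakseqclosed} are applied to the correct triple $(x_k,y_k,y)$ and that the subsequence principle is applied to both $(\tilde{x}^\delta)$ and any of its subsequences in the uniqueness part.
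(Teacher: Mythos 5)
Your proposal is correct; note that the paper itself gives no proof of Proposition~\ref{prop:conv}, delegating it to Theorems~2.4 and 2.7 of \cite{CKK15}, and your argument (weak sequential compactness of the bounded family in the Hilbert space $X$, identification of the weak limit via \eqref{weakseqclosed} fed with \eqref{Fconv} and $\mathcal{S}(y,y^{\delta_k})\leq\delta_k\to0$ from \eqref{delta}, the Radon--Riesz step for strong convergence, and the subsequence--subsequence principle under uniqueness) is exactly the standard route such a proof takes. The only point to make explicit is that a ``weakly convergent subsequence'' of $(\tilde{x}^\delta)_{\delta\in(0,\bar{\delta}]}$ must be taken along noise levels $\delta_k\to0$, since \eqref{Fconv} and \eqref{delta} supply the hypotheses of \eqref{weakseqclosed} only along such sequences.
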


To state convergence with rates to some solution $\xdag$ of \eqref{Fxy} we make use of a variational source condition
\begin{equation}\label{vsc}
\exists \beta\in(0,1)\ \forall x\in\mathcal{B}_R(\xdag) \quad 2(\xdag, \xdag-x)\leq
\beta\|\xdag-x\|^2+\varphi(\mathcal{S}(F(\xdag),F(x)))
\end{equation}
with some radius $R>0$ and some index function $\varphi:\R^+\to\R^+$, (i.e. $\varphi$ is monotonically increasing and $\lim_{t\to0}\varphi(t)=0$).
Condition \eqref{vsc} is a condition on the smoothness of $\xdag$ that is the stronger the faster $\varphi$ decays to zero as $t\to0$. It is, e.g., satisfied with $\mathcal{S}(y_1,y_2)=\frac12\|y_1-y_2\|^2$ and  $\varphi(t)\sim t$ if $\xdag$ lies in the range of the adjoint of the linearized forward operator (which is typically a smoothing operator) and $F'$ is Lipschitz continuous
\[
\xdag =F'(\xdag)^*w\mbox{ and }\forall x\in\mathcal{B}_R(\xdag)\,: \ \|F'(x)-F'(\tilde{x})\|\leq L
\|x-\tilde{x}\| \mbox{ and }L\|w\|<1\,.
\]
By a homogeneity argument for the case of linear $F$ it can be seen that the fastest possible decay of $\varphi$ at zero that gives a reasonable assumption in $\eqref{vsc}$ is $\varphi(t)\sim t$.

\begin{proposition}\label{prop:rates}
Let \eqref{vsc} hold and $\tilde{x}^\delta$ be contained in $\mathcal{B}_R(\xdag)$ for all $\delta\in(0,\bar{\delta}]$. Moreover, assume that there exist constants $C_1,C_2,C_3>0$ such that 
\begin{equation}\label{boundxtilderate}
\forall \delta\in(0,\bar{\delta}]: \quad \|\tilde{x}^\delta\|^2\leq
\|\xdag\|^2+C_1\varphi(C_2\delta)
\mbox{ and }\mathcal{S}(F(\tilde{x}^\delta),\ydel)\leq C_3 \delta\,.
\end{equation}
and that $\mathcal{S}$ satisfies the generalized triangle inequality
\begin{equation}\label{triang}
\mathcal{S}(y_1,y_2)\leq C_4 (\mathcal{S}(y_1,y_3)+\mathcal{S}(y_2,y_3))
\end{equation}
for some $C_4>0$ and all $y_1,y_2\in Y$.

Then the convergence rate 
\[
\|\tilde{x}^\delta-\xdag\|^2=O(\varphi(C\,\delta)) \mbox{ as }\delta\to0
\]
holds with $C=\max\{C_2,C_4(C_3+1)\}$.
\end{proposition}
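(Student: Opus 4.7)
The plan is to start from the polarization identity
\[
\|\xdag-\tilde{x}^\delta\|^2 = \|\tilde{x}^\delta\|^2 - \|\xdag\|^2 + 2(\xdag,\xdag-\tilde{x}^\delta),
\]
then feed the inner-product term into the variational source condition \eqref{vsc} (which is applicable since $\tilde{x}^\delta\in\mathcal{B}_R(\xdag)$ by assumption). This yields
\[
\|\xdag-\tilde{x}^\delta\|^2 \leq \|\tilde{x}^\delta\|^2 - \|\xdag\|^2 + \beta\|\xdag-\tilde{x}^\delta\|^2 + \varphi(\mathcal{S}(F(\xdag),F(\tilde{x}^\delta))),
\]
and absorbing the $\beta\|\xdag-\tilde{x}^\delta\|^2$ term on the left (legal because $\beta<1$) gives
\[
(1-\beta)\|\xdag-\tilde{x}^\delta\|^2 \leq \bigl(\|\tilde{x}^\delta\|^2-\|\xdag\|^2\bigr) + \varphi\bigl(\mathcal{S}(F(\xdag),F(\tilde{x}^\delta))\bigr).
\]

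Next I would estimate the two right-hand-side quantities separately. The norm-difference is handled directly by the first part of \eqref{boundxtilderate}, producing $C_1\varphi(C_2\delta)$. For the argument of $\varphi$ in the second term I would use the generalized triangle inequality \eqref{triang} with the intermediate point $y_3=\ydel$, together with $F(\xdag)=y$ and the noise bound \eqref{delta}, to obtain
\[
\mathcal{S}(F(\xdag),F(\tilde{x}^\delta)) \leq C_4\bigl(\mathcal{S}(y,\ydel)+\mathcal{S}(F(\tilde{x}^\delta),\ydel)\bigr) \leq C_4(1+C_3)\delta,
\]
invoking the second part of \eqref{boundxtilderate} for the latter summand.

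Finally, since $\varphi$ is an index function and hence monotonically increasing, both $\varphi(C_2\delta)$ and $\varphi(C_4(C_3+1)\delta)$ are bounded by $\varphi(C\delta)$ with the asserted constant $C=\max\{C_2,C_4(C_3+1)\}$. Combining gives
\[
(1-\beta)\|\xdag-\tilde{x}^\delta\|^2 \leq (C_1+1)\varphi(C\delta),
\]
from which the claimed $O(\varphi(C\delta))$ rate follows. There is no real obstacle in this argument; the only points requiring attention are verifying that the assumption $\tilde{x}^\delta\in\mathcal{B}_R(\xdag)$ justifies applying \eqref{vsc}, and exploiting monotonicity of $\varphi$ to collapse the two $\varphi$-terms into a single one with the combined constant $C$.
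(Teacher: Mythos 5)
Your proof is correct and follows essentially the same route as the paper: the identity $\|\tilde{x}^\delta-\xdag\|^2=\|\tilde{x}^\delta\|^2-\|\xdag\|^2+2(\xdag,\xdag-\tilde{x}^\delta)$, the variational source condition \eqref{vsc} with $x=\tilde{x}^\delta$, the bounds \eqref{boundxtilderate}, the triangle inequality \eqref{triang} with $y_3=\ydel$, and monotonicity of $\varphi$ to absorb the $\beta$-term and collapse the constants into $C=\max\{C_2,C_4(C_3+1)\}$. In fact your write-up spells out the absorption step and the final constant $(C_1+1)/(1-\beta)$ more explicitly than the paper does.
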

\begin{proof}
From \eqref{vsc} with $x=\tilde{x}^\delta$ and \eqref{boundxtilderate}, as well as \eqref{triang}, \eqref{delta}, and monotonicity of $\varphi$ we get 
\[
\begin{aligned}
&\|\tilde{x}^\delta-\xdag\|^2 
= \|\tilde{x}^\delta\|^2-\|\xdag\|^2+2(\xdag,\xdag-\tilde{x}^\delta)\\
&\leq \beta \|\tilde{x}^\delta-\xdag\|^2+C_1\varphi(C_2\delta)+\varphi(C_4(\mathcal{S}(F(\tilde{x}^\delta),\ydel)+\delta))\,.
\end{aligned}
\]
\end{proof}

\begin{corollary}
Let \eqref{weakseqclosed} hold and let $\hat{x}_{n(\delta),\rho(\delta)}^\delta$ be defined by \eqref{Ivanov_n_eta}, \eqref{discrprinc_n_eta}, with the discretization level $n=n(\delta)$ and the tolerances 
$\eta_{n(\delta)}^\delta$
$\underline{\eta}_{n(\delta)}^\delta$
$\overline{\eta}_{n(\delta)}^\delta$
chosen such that \eqref{eta1}, \eqref{eta2} and 
\begin{equation}\label{eta3}
\begin{aligned}
&\|P_{n(\delta)}x^\dagger\|_{n(\delta)}^2-\|\xdag\|^2\leq C_{1,1} \varphi(C_2\delta)\\
&\left.\begin{array}{r}\|x\|^2-\|x\|_{n(\delta)}^2\leq C_{1,2} \varphi(C_2\delta)\\[0.5ex] 
						r^\delta(x)-r_{n(\delta)}^\delta(x)\leq \tau_1\delta
\end{array}\right\}
\  \mbox{ for }x=\hat{x}_{n(\delta),\rho(\delta)}^\delta\\
&\overline{\eta}_{n(\delta)}^\delta\leq\tau_2\delta
\end{aligned}
\end{equation}
hold for fixed constants $\tau_1,\tau_2,C_{1,1},C_{1,2},C_2>0$ independent of $\delta$.\\
Then $\hat{x}_{n(\delta),\rho(\delta)}^\delta$ converges to $\xdag$ subsequentially in the sense of Proposition \ref{prop:conv}. \\
If additionally a variational source condition \eqref{vsc} and the generalized triangle inequality \eqref{triang} holds, then  
\[
\|\hat{x}_{n(\delta),\rho(\delta)}^\delta-\xdag\|^2=O(\varphi(C\delta)) \mbox{ as }\delta\to0\,,
\]
with $C=\max\{C_2,C_4(\tau+\tau_1+\tau_2+1)\}$.
\end{corollary}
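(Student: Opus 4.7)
The plan is to apply Propositions~\ref{prop:conv} and \ref{prop:rates} with $\tilde{x}^\delta:=\hat{x}_{n(\delta),\rho(\delta)}^\delta$. The hypotheses that still need to be checked are the uniform bound \eqref{boundxtilde}, the residual convergence \eqref{Fconv}, and (for the rate part) the two estimates in \eqref{boundxtilderate}. All of these will be derived by combining Lemma~\ref{lem:boundrho} with the tolerance conditions collected in \eqref{eta3}; note that \eqref{eta1} and \eqref{eta2} are explicitly among our assumptions, so Lemma~\ref{lem:boundrho} is applicable and immediately gives $\rho(\delta)\leq\|P_{n(\delta)}\xdag\|_{n(\delta)}^2$.

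For the norm bound I would chain three estimates. From the feasibility of $\tilde{x}^\delta$ in \eqref{Ivanov_n_eta} and from Lemma~\ref{lem:boundrho} we have $\|\tilde{x}^\delta\|_{n(\delta)}^2\leq\rho(\delta)\leq\|P_{n(\delta)}\xdag\|_{n(\delta)}^2$. The second inequality in the middle block of \eqref{eta3}, applied to $x=\tilde{x}^\delta$, replaces the discrete norm on the left by the continuous one up to $C_{1,2}\varphi(C_2\delta)$, and the first inequality in \eqref{eta3} replaces $\|P_{n(\delta)}\xdag\|_{n(\delta)}^2$ on the right by $\|\xdag\|^2$ up to $C_{1,1}\varphi(C_2\delta)$, so that
\[
\|\tilde{x}^\delta\|^2\leq\|\xdag\|^2+(C_{1,1}+C_{1,2})\varphi(C_2\delta).
\]
Since $\varphi$ is bounded on $(0,\bar\delta]$, this yields both \eqref{boundxtilde} and the first half of \eqref{boundxtilderate} with $C_1:=C_{1,1}+C_{1,2}$. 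For the residual, the upper part of the discrepancy principle \eqref{discrprinc_n_eta} together with $\overline{\eta}_{n(\delta)}^\delta\leq\tau_2\delta$ from \eqref{eta3} bounds $r_{n(\delta)}^\delta(\tilde{x}^\delta)\leq(\tau+\tau_2)\delta$, and the comparison $r^\delta(\tilde{x}^\delta)-r_{n(\delta)}^\delta(\tilde{x}^\delta)\leq\tau_1\delta$ from \eqref{eta3} then passes from the discrete to the continuous residual, giving
\[
\mathcal{S}(F(\tilde{x}^\delta),\ydel)=r^\delta(\tilde{x}^\delta)\leq(\tau+\tau_1+\tau_2)\delta,
\]
which delivers both \eqref{Fconv} and the second half of \eqref{boundxtilderate} with $C_3:=\tau+\tau_1+\tau_2$.

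With these bounds in hand the subsequential convergence claim follows directly from Proposition~\ref{prop:conv}, and under the additional assumptions \eqref{vsc}, \eqref{triang} the rate follows from Proposition~\ref{prop:rates}, producing exactly the announced constant $C=\max\{C_2,C_4(C_3+1)\}=\max\{C_2,C_4(\tau+\tau_1+\tau_2+1)\}$. The only genuine obstacle is the discretization mismatch between $\|\cdot\|$ and $\|\cdot\|_{n(\delta)}$, and between $r^\delta$ and $r_{n(\delta)}^\delta$; the content of \eqref{eta3} is precisely tailored to absorb this mismatch into the $\varphi(C_2\delta)$ and $\delta$ terms already appearing in Propositions~\ref{prop:conv} and~\ref{prop:rates}, so once the three comparison estimates are set up the proof is a careful concatenation of the inequalities above.
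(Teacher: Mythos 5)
Your proposal is correct and follows essentially the same route as the paper's own proof: Lemma~\ref{lem:boundrho} combined with the first two conditions in \eqref{eta3} gives the bound $\|\hat{x}_{n(\delta),\rho(\delta)}^\delta\|^2\leq\|\xdag\|^2+(C_{1,1}+C_{1,2})\varphi(C_2\delta)$, while \eqref{discrprinc_n_eta} with the remaining conditions in \eqref{eta3} gives $r^\delta(\hat{x}_{n(\delta),\rho(\delta)}^\delta)\leq(\tau+\tau_1+\tau_2)\delta$, after which Propositions~\ref{prop:conv} and \ref{prop:rates} apply exactly as you describe. Your write-up merely makes explicit the intermediate chaining that the paper leaves implicit.
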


\begin{proof}
By Lemma \ref{lem:boundrho} and \eqref{eta3} we have 
\[
\|\hat{x}_{n(\delta),\rho(\delta)}^\delta\|^2\leq \|\xdag\|^2+(C_{1,1}+C_{1,2})\varphi(C_2\delta)\,.
\]
Moreover, by \eqref{discrprinc_n_eta} and \eqref{eta3} we can estimate
\[
\mathcal{S}(F(\hat{x}_{n(\delta),\rho(\delta)}^\delta),\ydel)=r^\delta(\hat{x}_{n(\delta),\rho(\delta)}^\delta)
\leq r_n^\delta(\hat{x}_{n(\delta),\rho(\delta)}^\delta)+\tau_1\delta\leq (\tau+\tau_1+\tau_2)\delta\,.
\]
\end{proof}
\section{Computation of $\hat{x}_{n,\rho}^\delta$ and of $\rho(\delta)$}
\subsection{A Newton type iteration for computing $\hat{x}_{n,\rho}^\delta$}
For fixed discretization and noise level $n,\delta$ 
%(which we mostly skip in our notation in this section) 
and fixed radius $\rho<\rho(\delta)$, we approximate the nonlinear trust region subproblem \eqref{Ivanov_n} 
\begin{equation}\label{minr}
\min_{x_n\in X_n} \ r_n^\delta(x_n)\mbox{ s.t. } \|x_n\|_n^2\leq \rho
\end{equation}
by a sequence of quadratic subproblems arising from second order Taylor expansion of the cost function
\begin{equation}\label{minq}
\min_{x_n\in X_n} \ q^k(x_n)\mbox{ s.t. } \|x_n\|_n^2\leq \rho
\end{equation}
with 
\begin{equation}\label{q}
q^k(x)=r_n^\delta(x^k)+{r_n^\delta}'(x^k)(x-x^k)+\frac12 {r_n^\delta}''(x^k)(x-x^k)^2
\end{equation}
where $x^k$ is some current iterate.
Necessary second order optimality conditions for these two minimization problems (in case of \eqref{minq} they will also be sufficient, cf. \cite{Sorensen82}) are existence of $\lambda_\rho,\lambda^{k+1}\in\R$ such that  
\begin{eqnarray}
&&{r_n^\delta}'(x_{n,\rho}^\delta)+\lambda_\rho \langle x_{n,\rho}^\delta,\cdot\rangle_n=0	
\label{eq_rho}\\
&&\lambda_\rho\geq0\,, \ \|x_{n,\rho}^\delta\|_n^2\leq\rho\,, \ \lambda_\rho(\|x_{n,\rho}^\delta\|_n^2-\rho)=0	
\label{ie_rho}\\
&& \forall w\in C(x_{n,\rho}^\delta)\,: \ {r_n^\delta}''(x_{n,\rho}^\delta)w^2\geq 0
\label{sd_rho}
\end{eqnarray}  
%(cf. \cite{Bonnans} for the latter)
for \eqref{minr}, where for these simple constraints the critical cone is given by  
\begin{equation*}
C(x_{n,\rho}^\delta)=\begin{cases} 
X_n \mbox{ if } \|x_{n,\rho}^\delta\|_n^2<\rho\\
\{x_{n,\rho}^\delta\}_{\leq}=\{v\in X_n\, : \, \langle x_{n,\rho}^\delta,v\rangle_n\leq0\} 
\mbox{ if } \|x_{n,\rho}^\delta\|_n^2=\rho\mbox{ and }\lambda_\rho=0\\
\{x_{n,\rho}^\delta\}^\bot=\{v\in X_n\, : \, \langle x_{n,\rho}^\delta,v\rangle_n=0\} 
\mbox{ if } \|x_{n,\rho}^\delta\|_n^2=\rho\mbox{ and }\lambda_\rho>0
\end{cases}
\end{equation*}
and 
\begin{eqnarray}
&&{r_n^\delta}'(x_n^k)+{r_n^\delta}''(x_n^k)(x_n^{k+1}-x_n^k)+\lambda^{k+1}\langle x_n^{k+1},\cdot\rangle_n=0	
\label{eq_kp1}\\
&&\lambda^{k+1}\geq0\,, \ \|x_n^{k+1}\|_n^2\leq\rho\,, \ \lambda^{k+1}(\|x_n^{k+1}\|_n^2-\rho)=0	
\label{ie_kp1}\\
&& \forall w\in X_n\,: \ {r_n^\delta}''(x_n^k)w^2+\lambda^{k+1}\|w\|_n^2\geq 0
\label{sd_kp1}
\end{eqnarray}  
for \eqref{minq} with \eqref{q}.
For the error $x_n^{k+1}-x_{n,\rho}^\delta$ this implies
\begin{equation}\label{err_rho}
\begin{aligned}
&({r_n^\delta}''(x_{n,\rho}^\delta)+\lambda_\rho I_n)(x_n^{k+1}-x_{n,\rho}^\delta)\\
&={r_n^\delta}''(x_{n,\rho}^\delta) (x_n^{k+1}-x_{n,\rho}^\delta)
+\langle-\lambda_\rho x_{n,\rho}^\delta+\lambda^{k+1} x_n^{k+1} +(\lambda_\rho-\lambda^{k+1})x_n^{k+1},\cdot\rangle_n\\
&={r_n^\delta}''(x_{n,\rho}^\delta) (x_n^{k+1}-x_{n,\rho}^\delta)+{r_n^\delta}'(x_{n,\rho}^\delta)
-{r_n^\delta}'(x_n^k)-{r_n^\delta}''(x_n^k)(x_n^{k+1}-x_n^k)
+(\lambda_\rho-\lambda^{k+1})\langle x_n^{k+1},\cdot\rangle_n\\
&=:t_\rho+(\lambda_\rho-\lambda^{k+1})\langle x_n^{k+1},\cdot\rangle_n
\end{aligned}
\end{equation}
where $I_n:X_n^2\to\R$, $I_n(x_n,\tilde{x}_n)=\langle x_n,\tilde{x}_n\rangle_n$, and 
\begin{equation}\label{err_kp1}
\begin{aligned}
&({r_n^\delta}''(x_n^k)+\lambda^{k+1} I_n)(x_n^{k+1}-x_{n,\rho}^\delta)\\
&={r_n^\delta}''(x_n^k) (x_n^{k+1}-x_{n,\rho}^\delta)
+\langle-\lambda_\rho x_{n,\rho}^\delta+\lambda^{k+1} x_n^{k+1} +(\lambda_\rho-\lambda^{k+1})x_{n,\rho}^\delta,\cdot\rangle_n\\
&={r_n^\delta}''(x_n^k) (x_n^k-x_{n,\rho}^\delta)+{r_n^\delta}'(x_{n,\rho}^\delta)
-{r_n^\delta}'(x_n^k)
+(\lambda_\rho-\lambda^{k+1})\langle x_{n,\rho}^\delta, \cdot\rangle_n\\
&=:t^{k+1}
+(\lambda_\rho-\lambda^{k+1})\langle x_{n,\rho}^\delta, \cdot\rangle_n
\end{aligned}
\end{equation}
where under a Lipschitz condition on ${r_n^\delta}''$
\begin{equation}\label{Lipschitz_rpp}
\forall x_n,\tilde{x}_n\in \mathcal{D}(F_n) \,: \ \|{r_n^\delta}''(x)-{r_n^\delta}''(\tilde{x})\|_n\leq L\|x-\tilde{x}\|_n
\end{equation}
we have 
\[
\begin{aligned}
\|t_\rho\|_n=&\|\int_0^1 \left[{r_n^\delta}''(x_{n,\rho}^\delta+\theta(x_{n,\rho}^\delta-x_n^k))
-{r_n^\delta}''(x_{n,\rho}^\delta)\right]\, d\theta (x_{n,\rho}^\delta-x_n^k)\\
&+\left[{r_n^\delta}''(x_{n,\rho}^\delta)-{r_n^\delta}''(x_n^k)\right](x_n^{k+1}-x_n^k)\|_n\\
\leq& \frac{L}{2}\|x_{n,\rho}^\delta-x_n^k\|_n^2+L\|x_{n,\rho}^\delta-x_n^k\|_n\, \|x_n^{k+1}-x_n^k\|_n
\end{aligned}
\]
\[
\begin{aligned}
\|t^{k+1}\|_n=&\|\int_0^1 \left[{r_n^\delta}''(x_{n,\rho}^\delta+\theta(x_{n,\rho}^\delta-x_n^k))
-{r_n^\delta}''(x_{n,\rho}^\delta)\right]\, d\theta (x_{n,\rho}^\delta-x_n^k)\\
\leq& \frac{L}{2}\|x_{n,\rho}^\delta-x_n^k\|_n^2
\end{aligned}
\]
Testing the sum of \eqref{err_rho} and \eqref{err_kp1} with $x_n^{k+1}-x_{n,\rho}^\delta$ and using the fact that 
\[
\begin{aligned}
&(\lambda_\rho-\lambda^{k+1})
\langle x_n^{k+1}+x_{n,\rho}^\delta,x_n^{k+1}-x_{n,\rho}^\delta\rangle_n
=(\lambda_\rho-\lambda^{k+1})(\|x_n^{k+1}\|_n^2-\|x_{n,\rho}^\delta\|_n^2)\\
&=\begin{cases}
\lambda_\rho(\|x_n^{k+1}\|_n^2-\rho)\mbox{ if } \|x_{n,\rho}^\delta\|_n^2=\rho\\
\lambda^{k+1}(\|x_{n,\rho}^\delta\|_n^2-\rho)\mbox{ if } \|x_{n,\rho}^\delta\|_n^2<\rho
\end{cases}
\leq 0
\end{aligned}
\]
(the latter representation is readily checked by a distinction of the cases $\|x_n^{k+1}\|_n^2= / < \rho$)
we end up with 
\begin{equation}\label{est1}
\begin{aligned}
&({r_n^\delta}''(x_{n,\rho}^\delta)+{r_n^\delta}''(x_n^k)+\lambda^{k+1} I_n)(x_n^{k+1}-x_{n,\rho}^\delta)^2 +\lambda_\rho\|x_n^{k+1}-x_{n,\rho}^\delta\|_n^2\\
&\leq L(\|x_{n,\rho}^\delta-x_n^k\|_n^2+\|x_{n,\rho}^\delta-x_n^k\|_n\, \|x_n^{k+1}-x_n^k\|_n)
\|x_n^{k+1}-x_{n,\rho}^\delta\|_n
\,.
\end{aligned}
\end{equation}
From this we wish to extract an estimate on the error norm $\|x_n^{k+1}-x_{n,\rho}^\delta\|_n$.
However, the operator ${r_n^\delta}''(x_{n,\rho}^\delta)+{r_n^\delta}''(x_n^k)+\lambda^{k+1} I_n$ on the left hand side in \eqref{est1} is positive semidefinite only on the critical cone $C(x_{n,\rho}^\delta)$. 
Thus in the case $\lambda_\rho>0$ in which $C(x_{n,\rho}^\delta)$ only consists of directions orthogonal to $x_{n,\rho}^\delta$, we have to make an additional assumption to avoid negative contributions on the left hand side (that generally might have even larger modulus than the good term $\lambda_\rho\|x_n^{k+1}-x_{n,\rho}^\delta\|_n^2$) 
\begin{equation}\label{posdef_r}
%\forall x_n\in \mathcal{D}(F_n)\setminus\{0\}\,, \|x_n\|_n^2\leq \rho(\delta)\, \ x_n\in  
%\{{r_n^\delta}'(x_n)\}_{\leq} \,, \ r_n^\delta}''(x_n)\mbox{ positive semidefinite on }\, : 
%
%\forall x_{n,\rho}^\delta\in\mbox{argmin}\{ r_n^\delta(x_n)\, : \, x_n\in X_n\,, \ \|x_n\|_n^2\leq \rho\}\, : \ \ 
{r_n^\delta}''(x_{n,\rho}^\delta)+\lambda_\rho I_n \mbox{ is positive definite, where }\lambda_\rho=-\frac{({r_n^\delta}'(x_{n,\rho}^\delta),x_{n,\rho}^\delta)}{\rho}\,.
\end{equation}
Here we have used the fact that the Lagrange multiplier can be explicitely represented due to the necessary optimality conditions \eqref{eq_rho} \eqref{ie_rho}.

Assumption~\eqref{posdef_r} is obviously satisifed if the cost function $r_n^\delta$ is convex, but it also admits nonconvexity possibly arising due to nonlinearity of $F$ and/or $\mathcal{S}$, as the following example shows.

\begin{example}
$n=1$, $\xdag>0$, $C\in(\frac{1}{12{\xdag}^2},\frac{1}{4{\xdag}^2})$, $D> \frac{48C{\xdag}^2-1}{12(1/(2\sqrt{C})-\xdag)^2}>0$
\[
r_1^\delta(x)= \frac12(x-\xdag)^2-C(x-\xdag)^4+D\min\{0,x\}^4\,.
\]
It is easy to see that $x=0$ cannot be a solution to \eqref{minr}, thus according to \eqref{eq_rho}, \eqref{ie_rho}, a solution $x$ has to satisfy 
\begin{eqnarray*}
&&x\in[-\xdag,\xdag]\setminus\{0\}\ \wedge \ \frac{-{r_1^\delta}'(x)}{x}\geq0\\ 
&\Leftrightarrow&  
\Bigl(-\xdag<x<0\ \wedge \ (x-\xdag)(1-4C(x-\xdag)^2)+4Dx^3\geq 0\Bigr)\\
&& \hspace*{1cm}\vee
\Bigl(\xdag>x>0\ \wedge \ (x-\xdag)(1-4C(x-\xdag)^2)\leq 0\Bigr)\\
&\Rightarrow&  
\Bigl(-\xdag<x<0\ \wedge \ 1-4C(x-\xdag)^2\leq 0\Bigr)\vee
\Bigl(\xdag>x>0\ \wedge \ 1-4C(x-\xdag)^2\geq 0\Bigr)\\
&\Leftrightarrow&  
-\xdag<x<\xdag-\frac{1}{2\sqrt{C}}<0\ \vee \
\xdag>x>0
\end{eqnarray*}
where we have used the fact that $C<\frac{1}{4{\xdag}^2}$ in the last equivalence.
Hence we get  
\begin{eqnarray*}
&&x\in[-\xdag,\xdag]\setminus\{0\}\ \wedge \ \frac{-{r_1^\delta}'(x)}{x}\geq0\\ 
&\Rightarrow&  
{r_1^\delta}''(x)+\frac{-{r_1^\delta}'(x)}{x}
= 1-12C(x-\xdag)^2+12 D \min\{0,x\}^2+\frac{-{r_1^\delta}'(x)}{x}\\
&&\left\{\begin{array}{l}
\geq1-48C{\xdag}^2+12D(\frac{1}{2\sqrt{C}}-\xdag)^2>0
\ \mbox{ if }-\xdag<x<\xdag-\frac{1}{2\sqrt{C}}<0\\
=\frac{1}{x}\Bigl(\xdag-4C(2x^3-3x^2\xdag+{\xdag}^3)\Bigr)\geq\frac{\xdag}{x}(1-4C{\xdag}^2)>0
\ \mbox{ if }\xdag>x>0
\end{array}\right.
\end{eqnarray*}
where we have used the fact that $D> \frac{48C{\xdag}^2-1}{12(1/(2\sqrt{C})-\xdag)^2}$ and
$4C{\xdag}^2<1$. 
Thus condition \eqref{posdef_r} is satisfied although $r_1^\delta$ is nonconvex.
An illustration of this example is provided in Figure \ref{fig:ex}.
\begin{figure}
\hspace*{-2cm}
\begin{minipage}{1.3\textwidth}
\includegraphics[width=0.24\textwidth]{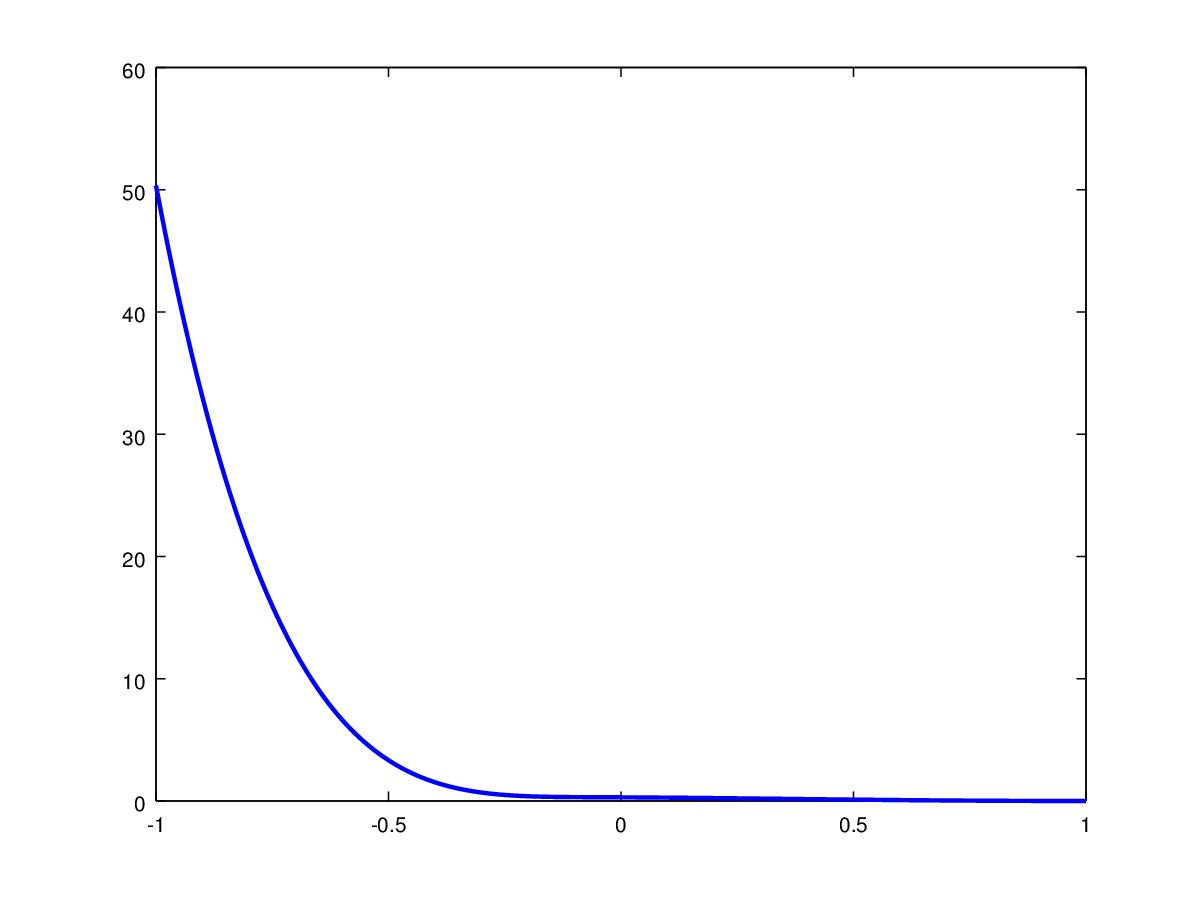}
\includegraphics[width=0.24\textwidth]{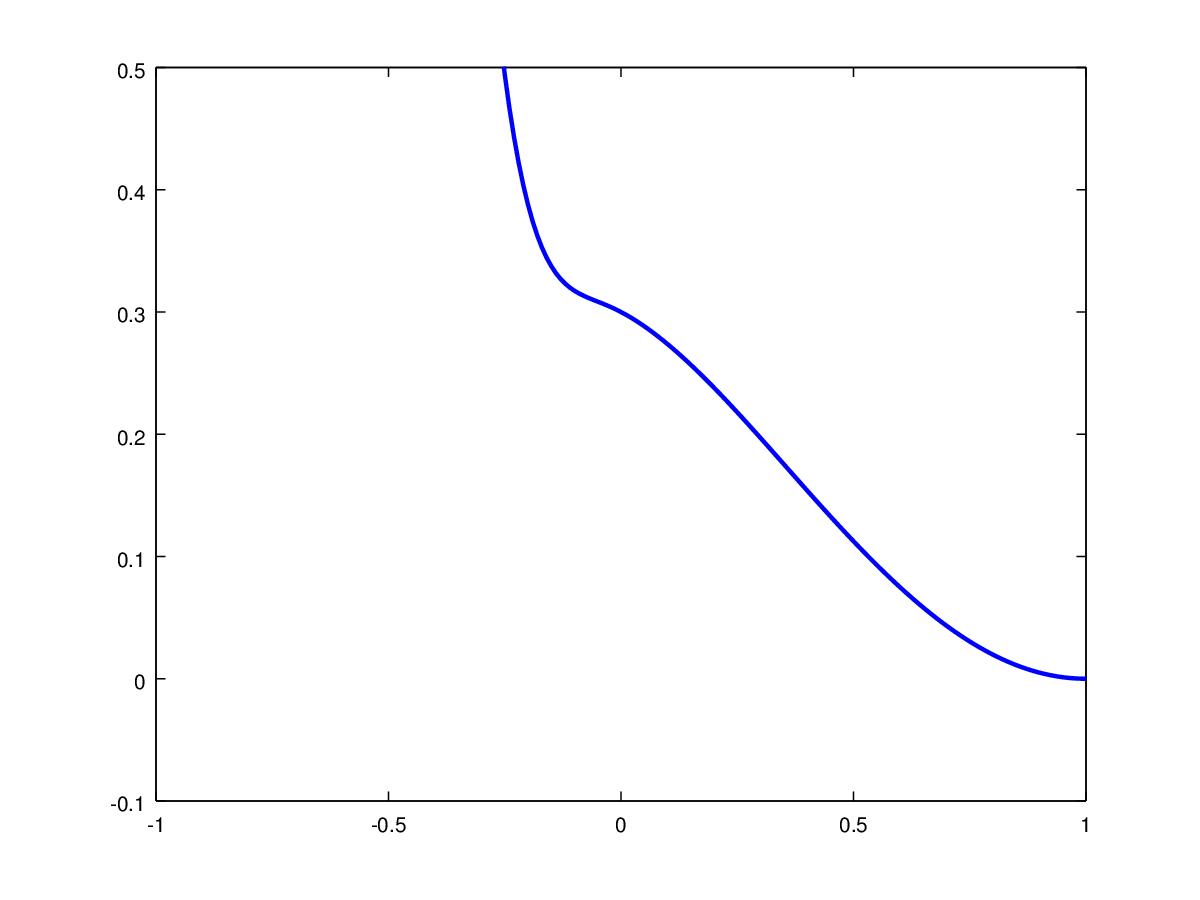}
\includegraphics[width=0.24\textwidth]{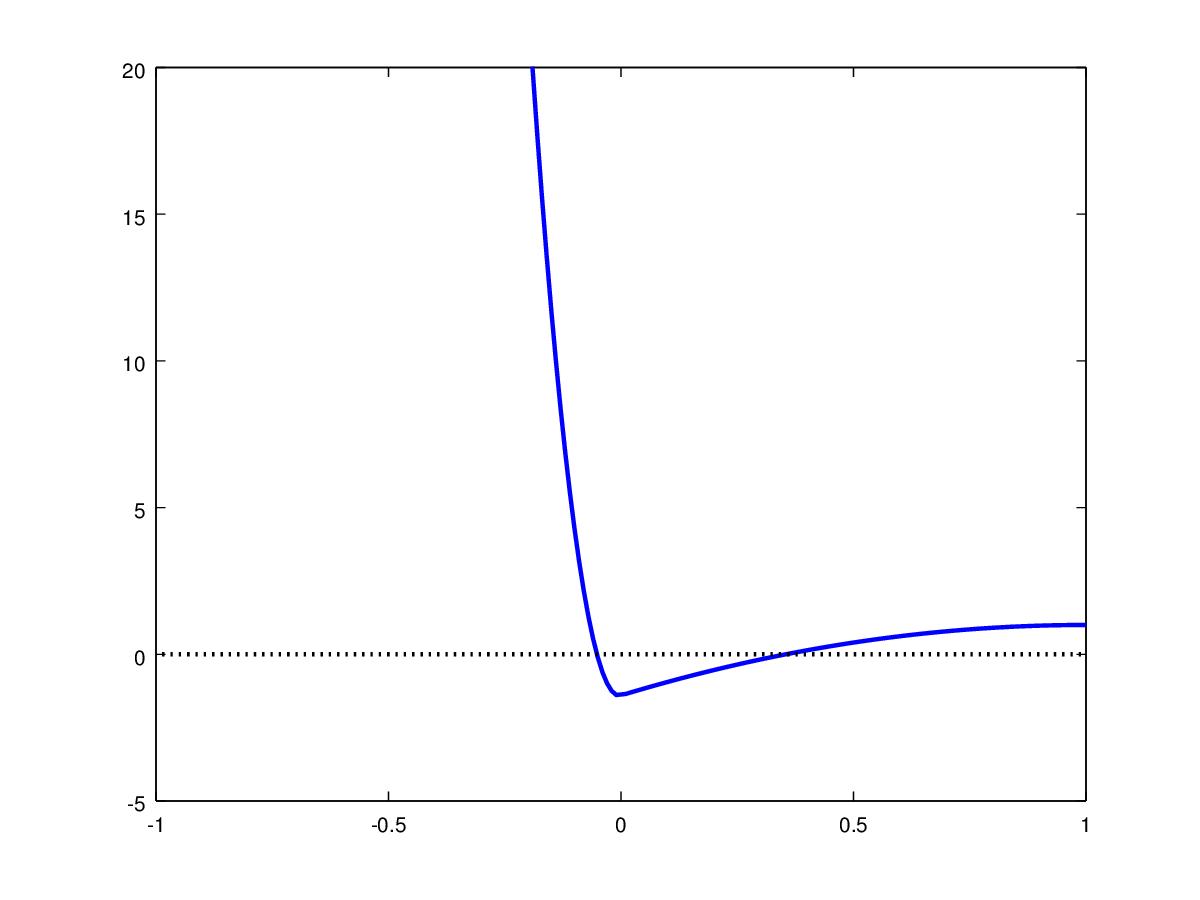}
\includegraphics[width=0.24\textwidth]{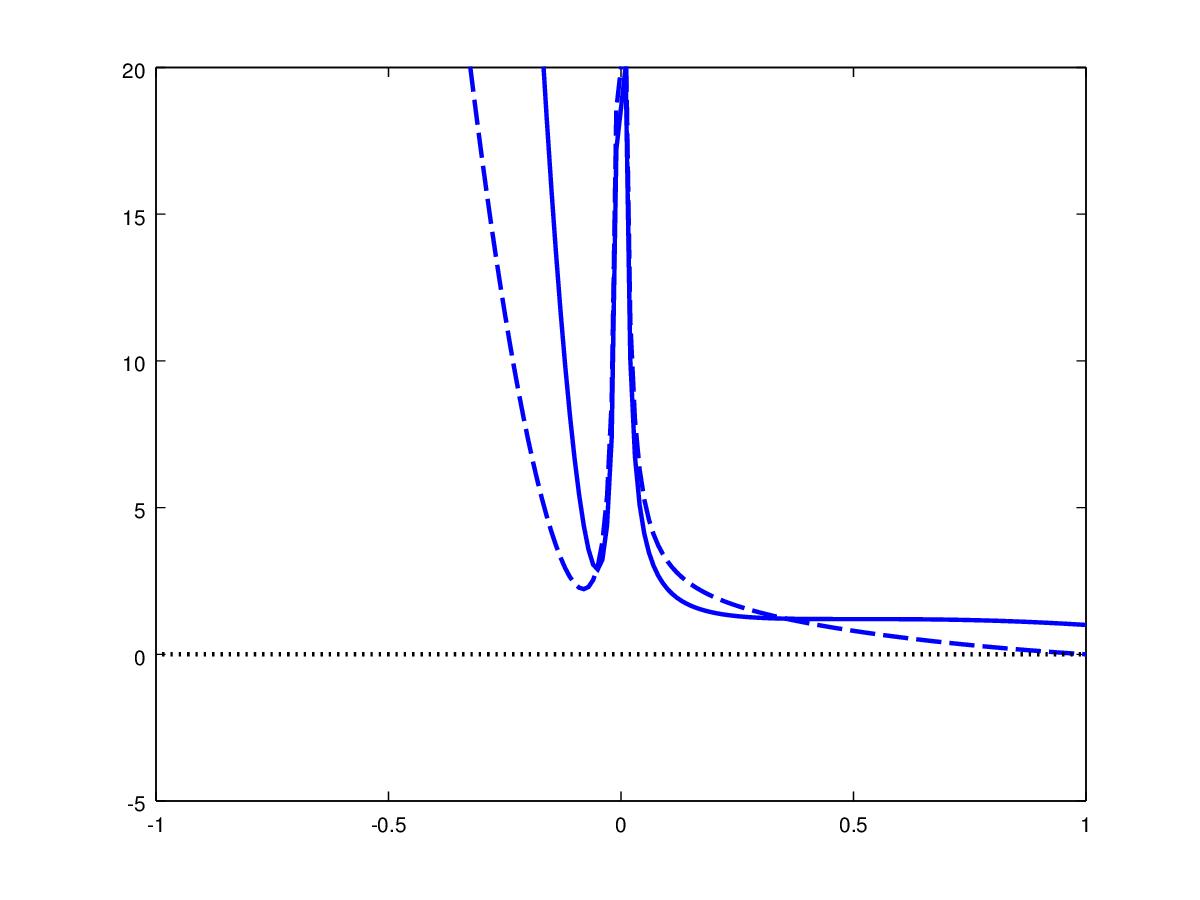}\\
\hspace*{0.09\textwidth}
(a)\hspace*{0.21\textwidth}
(b)\hspace*{0.21\textwidth}
(c)\hspace*{0.21\textwidth}
(d)
\end{minipage}
\caption{
From left to right: (a) function $r_1^\delta$, (b) detailed view of $r_1^\delta$, (c) Hessian ${r_1^\delta}''$, (d) shifted Hessian ${r_1^\delta}''+\lambda_\rho$ (solid) and Lagrange multiplier $\lambda_\rho$ (dashed)
\label{fig:ex}
}
\end{figure}
\end{example}

\begin{proposition}
Let $r_n^\delta$ be twice Lipschitz continuously differentiable \eqref{Lipschitz_rpp} and satisfy \eqref{posdef_r} at some minimizer $x_{n,\rho}^\delta$ of \eqref{minr}.\\
Then the iterates $x^{k+1}$ defined as minimizers to \eqref{minq} converge locally quadratically to $x_{n,\rho}^\delta$.  
\end{proposition}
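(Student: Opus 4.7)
The starting point is the error identity \eqref{est1}. The plan is to bound its left-hand side below by a positive multiple of $\|x_n^{k+1}-x_{n,\rho}^\delta\|_n^2$ and its right-hand side above by a quadratic expression in $\|x_n^k-x_{n,\rho}^\delta\|_n$, so that the two sides combine into a quadratic recursion which yields local quadratic convergence once the initial guess $x_n^0$ is close enough to $x_{n,\rho}^\delta$.

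For the lower bound on the LHS of \eqref{est1} I would regroup it as
\[
\bigl[{r_n^\delta}''(x_{n,\rho}^\delta)+\lambda_\rho I_n\bigr](x_n^{k+1}-x_{n,\rho}^\delta)^2+\bigl[{r_n^\delta}''(x_n^k)+\lambda^{k+1}I_n\bigr](x_n^{k+1}-x_{n,\rho}^\delta)^2.
\]
The first summand is at least $\mu\|x_n^{k+1}-x_{n,\rho}^\delta\|_n^2$ for some $\mu>0$, directly from the strict positive-definiteness assumption \eqref{posdef_r}; the second is nonnegative by the second-order optimality condition \eqref{sd_kp1} satisfied by the minimizer $x_n^{k+1}$ of the quadratic subproblem \eqref{minq}, a condition which is both necessary and sufficient as noted after \eqref{q}. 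Crucially, this split avoids any direct comparison between the multipliers $\lambda^{k+1}$ and $\lambda_\rho$, which would otherwise be the most delicate point to handle since $\lambda^{k+1}$ is implicitly determined by the subproblem.

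For the RHS of \eqref{est1} I would apply the triangle inequality $\|x_n^{k+1}-x_n^k\|_n\leq \|x_n^{k+1}-x_{n,\rho}^\delta\|_n+\|x_n^k-x_{n,\rho}^\delta\|_n$ and insert the result together with the lower bound above. Dividing by $\|x_n^{k+1}-x_{n,\rho}^\delta\|_n$ (the case of vanishing denominator being trivial), the inequality rearranges to
\[
(\mu-L\|x_n^k-x_{n,\rho}^\delta\|_n)\,\|x_n^{k+1}-x_{n,\rho}^\delta\|_n\leq 2L\,\|x_n^k-x_{n,\rho}^\delta\|_n^2.
\]
Provided $\|x_n^0-x_{n,\rho}^\delta\|_n\leq\mu/(4L)$, an easy induction then gives $\|x_n^{k+1}-x_{n,\rho}^\delta\|_n\leq (4L/\mu)\,\|x_n^k-x_{n,\rho}^\delta\|_n^2$ together with halving of the errors at each step, which is exactly the asserted local quadratic rate. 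Since the key algebraic identity \eqref{est1}, the Lipschitz estimates of the Taylor remainders, and the positivity statements are already available, no step is genuinely hard; the only care required is to verify that \eqref{posdef_r} combined with \eqref{sd_kp1} already controls the LHS, so that no additional hypothesis on continuity of the Lagrange multiplier $\lambda^{k+1}$ enters the argument.
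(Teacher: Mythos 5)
Your proposal is correct and follows essentially the same route as the paper's proof: split the left-hand side of \eqref{est1} into $[{r_n^\delta}''(x_{n,\rho}^\delta)+\lambda_\rho I_n]$ (coercive by \eqref{posdef_r}, where passing from positive definiteness to a uniform constant $\mu=\alpha_n$ uses finite dimensionality of $X_n$) plus $[{r_n^\delta}''(x_n^k)+\lambda^{k+1}I_n]$ (nonnegative by \eqref{sd_kp1}), apply the triangle inequality on the right, and run the resulting quadratic recursion inductively in a small enough neighborhood. The only differences are cosmetic constants (your radius $\mu/(4L)$ versus the paper's $\epsilon_n<\alpha_n/(3L)$).
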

\begin{proof}
Let $x_{n,\rho}^\delta\in\mbox{argmin}\{ r_n^\delta(x_n) \, : \, x_n\in X_n\,, \ \|x_n\|_n^2\leq \rho\}$.
By finite dimensionality of the space $X_n$, condition \eqref{posdef_r} implies existence of $\alpha_n>0$ such that 
\begin{equation}\label{alphan}
\forall w\in X_n\, : \ 
{r_n^\delta}''(x_{n,\rho}^\delta)w^2-\frac{({r_n^\delta}'(x_{n,\rho}^\delta),x_{n,\rho}^\delta)}{\rho}\|w\|_n^2\geq\alpha_n\|w\|_n^2
\end{equation}
Let the starting point $x^0\in X_n$ be contained in some $\epsilon_n$-neighborhood (wrt. $\|\cdot\|_n$) of $x_{n,\rho}^\delta$ with $\epsilon_n\in(0,\frac{\alpha_n}{3L})$.
Using \eqref{alphan} with $w=x^{k+1}-x_{n,\rho}^\delta$ in \eqref{est1} (recall that ${r_n^\delta}''(x_n^k)+\lambda^{k+1} I_n$ is positive semidefinite on all of $X_n$ by \eqref{sd_kp1}) for $k=0$ by the triangle inequality yields
\[
\alpha_n \|x_n^{k+1}-x_{n,\rho}^\delta\|_n^2
\leq L\bigl(2\|x_{n,\rho}^\delta-x_n^k\|_n^2+\|x_{n,\rho}^\delta-x_n^k\|_n\, \|x_n^{k+1}-x_{n,\rho}^\delta\|_n\bigr)\|x_n^{k+1}-x_{n,\rho}^\delta\|_n\,,
\]
hence
\begin{eqnarray}
\|x_n^{k+1}-x_{n,\rho}^\delta\|_n
&\leq& \frac{2L}{\alpha_n -L\epsilon_n}\|x_{n,\rho}^\delta-x_n^k\|_n^2\nonumber\\
&\leq& \frac{3L}{\alpha_n}\|x_{n,\rho}^\delta-x_n^k\|_n^2 \label{quad}\\
&\leq& \frac{3L\epsilon_n}{\alpha_n}\|x_{n,\rho}^\delta-x_n^k\|_n\label{contr}\\
&<&\epsilon_n\,.\label{neighb}
\end{eqnarray}
By an inductive argument using the same estimate for general $k$, the iterates remain in the $\epsilon_n$-neighborhood of $x_{n,\rho}^\delta$ (cf. \eqref{neighb}) and satisfy a contraction (cf. \eqref{contr}) as well as a quadratic convergence (cf. \eqref{quad}) estimate.
\end{proof}

\medskip

We now consider these conditions in more detail in the special case of a Hilbert space norm 
\begin{equation}\label{Snorm}
\mathcal{S}(y_1,y_2)=\frac12\|y_1-y_2\|^2
\end{equation}
for measuring the data discrepancy. The optimality conditions \eqref{eq_rho}--\eqref{sd_rho} then in terms of the discretized forward operator $F_n$ read as follows:
\begin{equation}\label{optsys_F}
\begin{aligned}
&(a)
\begin{cases}
\lambda_\rho=0
\mbox{ and }
\|x_{n,\rho}^\delta\|_n^2\leq\rho\mbox{ and }\\
\forall w\in X_n \, : \ \langle F_n(x_{n,\rho}^\delta)-\ydel,F_n'(x_{n,\rho}^\delta)w\rangle_n=0
\mbox{ and }\\
\forall w\in X_n \, : \ 
\| F_n'(x_{n,\rho}^\delta)w\|_n^2
+\langle F_n(x_{n,\rho}^\delta)-\ydel,F_n''(x_{n,\rho}^\delta)w^2\rangle_n
\geq0\\
\end{cases}\\
&\mbox{ or }\\
&(b)\begin{cases} 
\lambda_\rho=\frac{-\langle F_n(x_{n,\rho}^\delta)-\ydel,F_n'(x_{n,\rho}^\delta)x_{n,\rho}^\delta\rangle_n}{\rho}>0\mbox{ and }
\|x_{n,\rho}^\delta\|_n^2=\rho\mbox{ and }\\
\forall w\in \{x_{n,\rho}^\delta\}^\bot \, : \ 
\langle F_n(x_{n,\rho}^\delta)-\ydel,F_n'(x_{n,\rho}^\delta)w\rangle_n
=0\mbox{ and }\\
\forall w\in \{x_{n,\rho}^\delta\}^\bot \, : \ 
\| F_n'(x_{n,\rho}^\delta)w\|_n^2
+\langle F_n(x_{n,\rho}^\delta)-\ydel,F_n''(x_{n,\rho}^\delta)w^2\rangle_n
\geq0
\end{cases}
\end{aligned}
\end{equation}
where 
in case (a) we use the fact that for all $w\in X_n$ either $w$ or $-w$ is in the critical cone and 
in case (b) we have used the fact that for any $w\in X_n$
\[
\langle F_n(x_{n,\rho}^\delta)-\ydel,F_n'(x_{n,\rho}^\delta)w\rangle_n+\lambda_\rho
\langle x_{n,\rho}^\delta,w\rangle_n=
\langle F_n(x_{n,\rho}^\delta)-\ydel,F_n'(x_{n,\rho}^\delta)\mbox{Proj}_{\{x_{n,\rho}^\delta\}^\bot}w\rangle_n
\]
As a matter of fact this shows that the case (a) of a vanishing Lagrange multiplier is not really relevant here, since with 
$\hat{Y}_n:=\overline{\mathcal{R}(F_n'(x_{n,\rho}^\delta))}$ 
it implies that $\mbox{Proj}_{\hat{Y}_n}(F_n(x_{n,\rho}^\delta)-\ydel)=0$ so that 
if $\overline{\bigcup_{n\in\N} \hat{Y}_n}=Y$ we get
\[
r_n^\delta(x_{n,\rho}^\delta)=\frac12\|F_n(x_{n,\rho}^\delta)-\ydel\|_n^2=
\frac12\|\mbox{Proj}_{\hat{Y}_n^\bot}(F_n(x_{n,\rho}^\delta)-\ydel)\|_n^2
<\delta+\underline{\eta}_n^\delta-\eta_n^\delta\,,
\]
where the last inequality holds for $n$ sufficiently large, provided $\delta+\underline{\eta}_n^\delta-\eta_n^\delta>0$, which is compatible with the assumptions made in Section \ref{sec:reg}.
On the other hand, from \eqref{Ivanov_n_eta} and \eqref{discrprinc_n_eta} it follows that 
\[
r_n^\delta(x_{n,\rho(\delta)}^\delta)>\delta+\underline{\eta}_n^\delta-\eta_n^\delta
\]
which by Lemma~\ref{lem:mon} implies $\rho>\rho(\delta)$.\\
The positivity condition in \eqref{posdef_r} becomes 
\begin{equation}\label{posdef_F}
\forall w\in X_n\setminus\{0\} \, : \ \|F_n'(x_{n,\rho}^\delta)w\|_n^2+\langle F_n(x_{n,\rho}^\delta)-\ydel, F_n''(x_{n,\rho}^\delta)w^2\rangle_n +\lambda_\rho\|w\|^2>0\,.
\end{equation}
This condition will indeed be satisfied for $\rho=\rho(\delta)$ for $\delta$ sufficiently small and $n$ sufficiently large, e.g., in the sitution of an estimate 
\[
\forall w\in X_n\, : \ \|F_n''(x_{n,\rho}^\delta)w^2\|_n\leq C \|F_n'(x_{n,\rho}^\delta)w\|_n^2
\]
(which might be interpreted as a condition on the nonlinearity of the forward operator)
holding with a constant $C$ independent of $n$, since then for all $w\in X_n\setminus\{0\}$
\[
\begin{aligned} 
&\|F_n'(x_{n,\rho}^\delta)w\|_n^2+\langle F_n(x_{n,\rho}^\delta)-\ydel, F_n''(x_{n,\rho}^\delta)w^2\rangle_n +\lambda_\rho\|w\|^2\\
&\geq\|F_n'(x_{n,\rho}^\delta)w\|_n^2-(\tau\delta+\overline{\eta}_n^\delta) \|F_n''(x_{n,\rho}^\delta)w^2\|_n +\lambda_\rho\|w\|^2\\
&\geq(1-(\tau\delta+\overline{\eta}_n^\delta)C)\|F_n'(x_{n,\rho}^\delta)w\|_n^2+\lambda_\rho\|w\|^2>0\,.
\end{aligned}
\]
However, Assumption \eqref{posdef_F} possibly remains valid also in case $x_{n,\rho}^\delta$ is still far away from $\xdag$ since then the positive contribution of the (then typically larger) Lagrange multiplier takes effect: Note that both the residual $F_n(x_{n,\rho}^\delta)-\ydel$ and the norm $\frac{1}{\sqrt{\rho}}$ of the ratio $\frac{x_{n,\rho}^\delta}{\rho}$ get larger for smaller $\rho<\|\xdag\|^2$. 

\begin{corollary}
Let $Y$ be a Hilbert space, let $\mathcal{S}$, $\mathcal{S}_n$ be defined by the squared norm \eqref{Snorm} and its finite dimensional approximation $\mathcal{S}_n(y_1,y_2)=\frac12\|y_1-y_2\|_n^2$, assume that $F_n$ is twice Lipschitz continuously differentiable and that for some minimizer  $x_{n,\rho}^\delta$ of \eqref{minr} condition \eqref{posdef_F}  holds. \\
Then the iterates $x_n^{k+1}$ defined as minimizers to \eqref{minq} converge locally quadratically to $x_{n,\rho}^\delta$ .  
\end{corollary}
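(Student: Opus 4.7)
The plan is to reduce the corollary to the preceding proposition by interpreting the two hypotheses \eqref{posdef_F} and twice Lipschitz differentiability of $F_n$ as the corresponding hypotheses \eqref{posdef_r} and \eqref{Lipschitz_rpp} on $r_n^\delta$. Both translations are routine chain rule computations, so the real content is just bookkeeping.

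First, I would compute the first and second derivatives of $r_n^\delta(x)=\tfrac12\|F_n(x)-\ydel\|_n^2$, obtaining
\[
{r_n^\delta}'(x)w=\langle F_n(x)-\ydel,F_n'(x)w\rangle_n,\qquad
{r_n^\delta}''(x)w^2=\|F_n'(x)w\|_n^2+\langle F_n(x)-\ydel,F_n''(x)w^2\rangle_n.
\]
Plugging these into the expression $\lambda_\rho=-({r_n^\delta}'(x_{n,\rho}^\delta),x_{n,\rho}^\delta)/\rho$ from \eqref{posdef_r} recovers exactly the Lagrange multiplier in case (b) of \eqref{optsys_F}, and the positive definiteness in \eqref{posdef_r} then reads precisely as \eqref{posdef_F}. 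Thus hypothesis \eqref{posdef_F} is nothing but \eqref{posdef_r} written out in the Hilbert space setting.

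Second, I would verify that twice Lipschitz continuous differentiability of $F_n$ implies \eqref{Lipschitz_rpp} in a neighborhood of $x_{n,\rho}^\delta$. Differentiating the product and composition terms in ${r_n^\delta}''(x)-{r_n^\delta}''(\tilde x)$, one obtains three contributions: one involving $F_n'(x)^*F_n'(x)-F_n'(\tilde x)^*F_n'(\tilde x)$, one involving $F_n(x)-F_n(\tilde x)$ acting through $F_n''(x)$, and one involving $F_n''(x)-F_n''(\tilde x)$ paired with the bounded residual $F_n(\tilde x)-\ydel$. On a bounded neighborhood of $x_{n,\rho}^\delta$ each factor is bounded and Lipschitz (with the last piece using the assumed Lipschitz continuity of $F_n''$), so a uniform Lipschitz constant $L$ exists; this is the expected main, but still routine, obstacle.

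Having established both \eqref{posdef_r} and \eqref{Lipschitz_rpp} for $r_n^\delta$, I would simply invoke the preceding proposition to conclude local quadratic convergence of the iterates $x_n^{k+1}$ to $x_{n,\rho}^\delta$.
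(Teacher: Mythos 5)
Your reduction is exactly the paper's (implicit) argument: the text preceding the corollary already records that for $\mathcal{S}_n(y_1,y_2)=\frac12\|y_1-y_2\|_n^2$ the derivatives of $r_n^\delta$ are ${r_n^\delta}'(x)w=\langle F_n(x)-\ydel,F_n'(x)w\rangle_n$ and ${r_n^\delta}''(x)w^2=\|F_n'(x)w\|_n^2+\langle F_n(x)-\ydel,F_n''(x)w^2\rangle_n$, so that \eqref{posdef_F} is \eqref{posdef_r} spelled out, and the corollary is then just the Proposition applied to this $r_n^\delta$. Your extra remark that twice Lipschitz differentiability of $F_n$ yields \eqref{Lipschitz_rpp} only on a bounded neighborhood (which suffices, since the Proposition's argument is local and confined to the bounded feasible ball) is a correct and slightly more careful version of what the paper takes for granted.
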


\begin{remark}
In view of the well-known equivalence between the Levenberg Marquardt method and the application of a trust region method to successive quadratic approximations of the nonlinear least squares cost function, there is an obvious relation to \cite{HankeLevMar}, still more, since we also use the discrepancy principle for choosing the trust region radius (as is done for the regularization parameter $\alpha$ in \cite{HankeLevMar}). The main difference to the method described here, besides the somewhat more general data space setting, lies in the fact that we start from the nonlinear trust region problem and work with quadratic approximations of the cost function, that are possibly nonconvex. This is why the algorithm from \cite{RendlWolkowicz97} plays a key role here.
\end{remark}
\subsection{Solving the nonconvex quadratic trust region subproblem}
Discretization $x_n=\sum_{j=1}^n x^i\phi^i$ with a basis $\{\phi^1,\ldots,\phi^n\}$ of $X_n$ leads to an equivalent formulation of \eqref{minq} as a constrained quadratic minimization problem
\begin{equation}\label{TRS}
\min_{x\in \R^n} \ x^TAx-2a^Tx \mbox{ s.t. } x^Tx\leq s^2
\end{equation}
with a not necessarily positive semidefinite Hessian $A$.
Such problems can be very efficiently solved by the method proposed in \cite{RendlWolkowicz97}, which we briefly sketch in the following.

The key idea relies in the fact that solving \eqref{TRS} can be recast into a root finding problem for a parametrized eigenvalue problem. This can be motivated by the following chain of inequalities, where $\mu^*$ is the optimal value of \eqref{TRS} (see Section 2.2 in \cite{RendlWolkowicz97}).
\begin{eqnarray}
\mu^*&=&\min_{\|x\|=s, y_0^2=1} x^TAx-2y_0a^Tx
\nonumber\\
&=&\max_t \min_{\|x\|=s, y_0^2=1} x^TAx-2y_0a^Tx+ty_0^2-t
\nonumber\\
&\geq&\max_t \min_{\|x\|^2+y_0^2=s^2+1} x^TAx-2y_0a^Tx+ty_0^2-t
\label{maxmineigval}\\
&\geq&\max_{t,\lambda} \min_{x,y_0} x^TAx-2y_0a^Tx+ty_0^2-t+\lambda(\|x\|^2+y_0^2-s^2-1)
\nonumber\\
&=&\max_{r,\lambda} \min_{x,y_0} x^TAx-2y_0a^Tx+ry_0^2-r+\lambda(\|x\|^2-s^2)
\nonumber\\
&=&\max_{\lambda} \Bigl(\max_{r} \min_{x,y_0} x^TAx-2y_0a^Tx+ry_0^2-r+\lambda(\|x\|^2-s^2)\Bigr)
\nonumber\\
&=&\max_{\lambda} \min_x \min_{y_0^2=1} x^TAx-2y_0a^Tx+\lambda(\|x\|^2-s^2)\ = \mu^*
\nonumber
\end{eqnarray}
Indeed, with $y=\left(\begin{array}{c}y_0\\x\end{array}\right)$, the optimization problem on line \eqref{maxmineigval} can be rewritten as
\begin{equation}\label{k} 
\max_t\min_{\|y\|^2=s^2+1}y^T D(t) y-t=\max_t (s^2+1)\lambda_{\min}(D(t))-t = \max_t k(t)
\end{equation}
where 
\begin{equation}\label{Dt}
D(t)=\left(\begin{array}{cc} t&-a\\-a&A\end{array}\right)
\end{equation} 
and $\lambda_{\min}(M)$ denotes the smallest (possibly negative) eigenvalue of some matrix $M$.

More precisely, it can be shown (Theorem 14 in \cite{RendlWolkowicz97}), that unless the so-called hard case occurs, for any $t\in\R$ and for any $\left(\begin{array}{c}y_0(t)\\z(t)\end{array}\right)$ being a normalized eigenvector corresponding to $\lambda_{\min}(D(t))$, the vector $x=\frac{1}{y_0(t)}z(t)$ is well-defined and solves \eqref{TRS} with $s=\frac{1-y_0(t)^2}{y_0(t)^2}$
(Here the ``hard case'' is the pathological situation of $a$ being orthogonal to the eigenspace corresponding to $\lambda_{\min}(A)$.) 

Based on this observation, it remains to iteratively find a root of the function $\psi(t)=\sqrt{s^2+1}-\frac{1}{y_0(t)}$ (that can be shown to be almost linear, nonincreasing and concave), or equivalently, to find a stationary point of the function $k$ in \eqref{k}, which can be done very efficiently using inverse interpolation, cf. \cite{RendlWolkowicz97}.

The main computational effort of the resulting algorithm lies in the determination of an eigenvector corresponding to the smallest eigenvalue of $D(t)$ in each of these root finding iterations. For this purpose fast routines exist, (in our numerical tests we use the code from \url{http://www.math.uwaterloo.ca/~hwolkowi//henry/software/trustreg.d/} employing the Matlab routine eigs based on an Arnoldi method).
Since this method only uses matrix vector products with $A$, it suffices to provide a routine for evaluating the linear operators $F_n'(x)$, $F_n''(x)^*(F_n(x)-\ydel)$ in a given direction $d$, which particularly makes sense for $F_n$ being the (discretization of a) forward operator for some inverse problem, e.g., some parameter identification problem in a PDE. Namely, in that case these actions just correspond to solving the underlying linearized PDE model with some inhomogeneity involving $d$.

\subsection{Newton's method for computing $\rho(\delta)$}

In view of the discrepancy principle \eqref{discrprinc_n_eta} for choosing $\rho=\rho(\delta)$, we have to approximate a root of the one-dimensional function $\phi$ defined by 
\begin{equation}\label{phi}
\phi(\rho)=r_n^\delta(x_{n,\rho}^\delta)-r_d \quad \mbox{ where }r_d=\frac{(\tau+1)\delta+\overline{\eta}_n^\delta+\underline{\eta}_n^\delta}{2}\,,
\end{equation}
which by Lemma~\ref{lem:mon} is monotonically decreasing. For this purpose, Newton's method is known to converge globally and quadratically, provided $\phi$ is twice continuously differentiable. As a matter of fact, in the generic case of $x_{n,\rho}^\delta$ lying on the boundary of the feasible set, the derivative of $\phi$ is just the Lagrange multiplier for \eqref{minr}, since by the complementarity condition in \eqref{ie_rho}
\[
\begin{aligned}
\phi'(\rho)=&\frac{d}{d\rho} r_n^\delta(x_{n,\rho}^\delta)=
\frac{d}{d\rho} \Bigl(r_n^\delta(x_{n,\rho}^\delta)+\lambda_\rho(\|x_{n,\rho}^\delta\|^2-\rho)\Bigr)\\
=& {r_n^\delta}'(x_{n,\rho}^\delta)\frac{d x_{n,\rho}^\delta}{d\rho}
+\frac{d\lambda_\rho}{d\rho}(\|x_{n,\rho}^\delta\|^2-\rho)
+\lambda_\rho \langle x_{n,\rho}^\delta,\frac{d x_{n,\rho}^\delta}{d\rho}\rangle_n
-\lambda_\rho\\
=&-\lambda_\rho
\end{aligned}
\]
by \eqref{eq_rho} and $\|x_{n,\rho}^\delta\|^2=\rho$.
A similar observation has already been made for the derivative of the cost functional with respect to the regularization parameter in Tikhonov regularization cf. \cite{KKV11}.

\subsection{Algorithm}
Altogether we arrive at the following nested iteration.
 
\begin{algorithm}{.}\label{alg}
\begin{algorithmic}[1]
\STATE Given $\delta>0$, choose $\tau>1$, $n$, 
		$\underline{\eta}_n^\delta$, $\overline{\eta}_n^\delta$, $\eta_n^\delta$ 
		such that \eqref{eta1}, \eqref{eta2}, \eqref{eta3} are satisfied, \\$\rho_0=0$, $\rho_1>0$;
\STATE Set $l=1$, $\hat{x}_{n,\rho_0}^\delta=0$, $\hat{x}_{n,\rho_1}^\delta=0$;
\WHILE
{$r_n^\delta(\hat{x}_{n,\rho_l}^\delta) > \tau\delta +\overline{\eta}_n^\delta$}
%{$\cdots$ (Iteration for $\rho$)} 
	\STATE $k=0$, $x_n^0=\hat{x}_{n,\rho_{l-1}}^\delta$
	\WHILE
{$r_n^\delta(x_n^k)>\min_{x_n\in X_n} \{r_n^\delta(x_n)\ : \ \|x_n\|_n^2\leq \rho_l\}+\eta_n^\delta$}
%{$\cdots$ (Iteration for $\hat{x}_{n,\rho}^\delta$)}  
		\STATE Compute solution $x_n^{k+1}$ and Lagrange multiplier $\lambda^{k+1}$ to \eqref{minq}, \eqref{q}, $\rho=\rho_l$  
		\STATE Set $k=k+1$ 
    \ENDWHILE    
	\STATE Set $\hat{x}_{n,\rho_l}=x_n^k$, $\lambda_{\rho_l}=\lambda^k$
	\STATE Set $\rho_{l+1}=\rho_l+\frac{r_n^\delta(\hat{x}_{n,\rho_l}^\delta)-r_d}{\lambda_{\rho_l}}$ with $r_d$ as in \eqref{phi}
	 \STATE Set $l=l+1$ 
\ENDWHILE    
 \end{algorithmic}
\end{algorithm}

\section{Numerical tests}

To illustrate performance of Algorithm \ref{alg} we make use of an implementation of the method described in \cite{RendlWolkowicz97} available on the web page of one of the authors \url{http://www.math.uwaterloo.ca/~hwolkowi//henry/software/trustreg.d/} and consider the nonlinear integral equation
\[
y(t)=\int_0^t (x(s)+10 x(s)^3)\, ds \ t\in(0,1)
\]
with $X=Y=L^2(0,1)$ and $\mathcal{S}(y_1,y_2)=\frac12\|y_1-y_2\|_{L^2}^2$.
The integral equation is discretized with a composite trapeziodal rule on an equidistant grid with $100$ breakpoints. % and the zero function is used as a starting value in all our tests.

Figure \ref{fig_data_recons} shows the noisy data as well as the exact and reconstructed solutions with different noise levels.
\begin{figure}
%\hspace*{-2cm}
%\begin{minipage}{1.3\textwidth}
\includegraphics[width=0.32\textwidth]{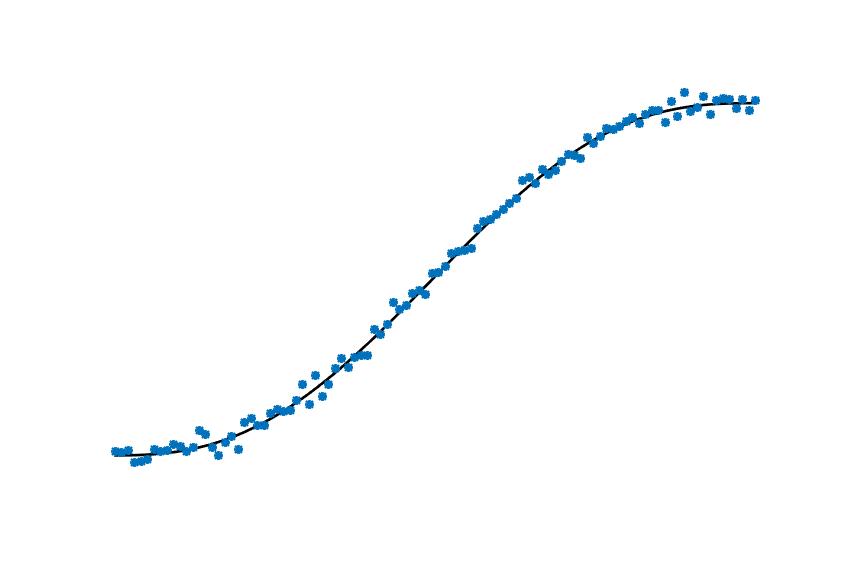}
\includegraphics[width=0.32\textwidth]{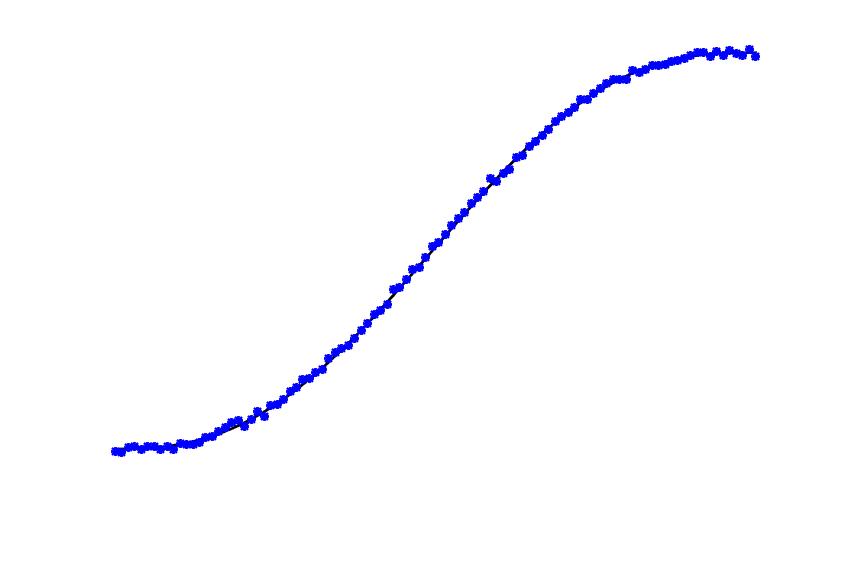}
\includegraphics[width=0.32\textwidth]{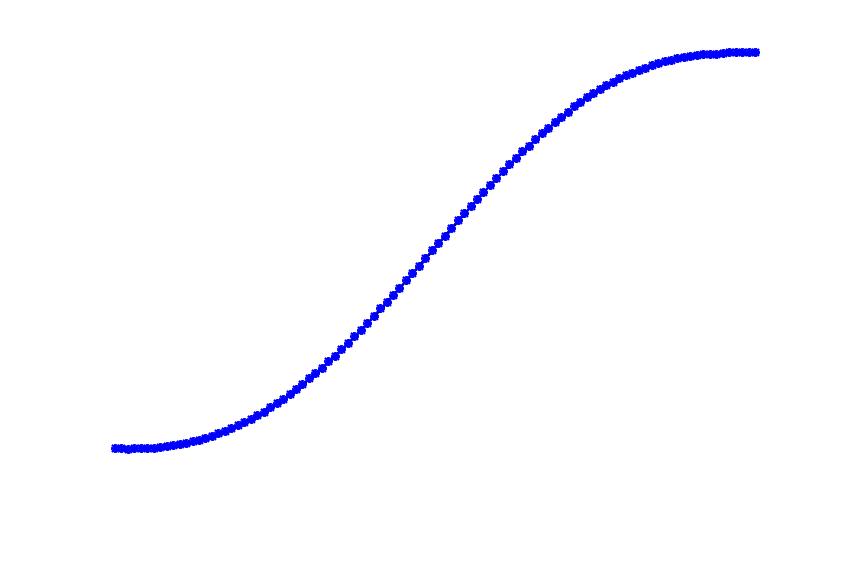}
\\
\includegraphics[width=0.32\textwidth]{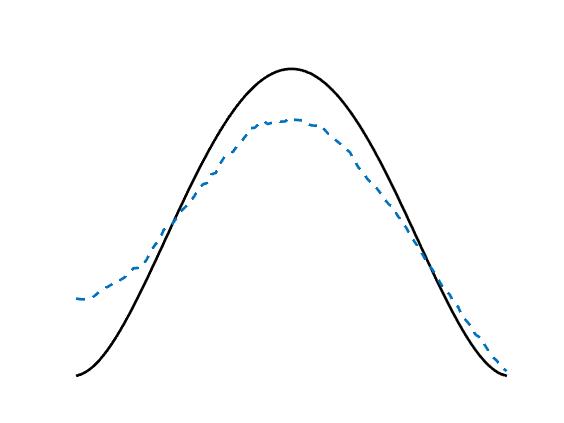}
\includegraphics[width=0.32\textwidth]{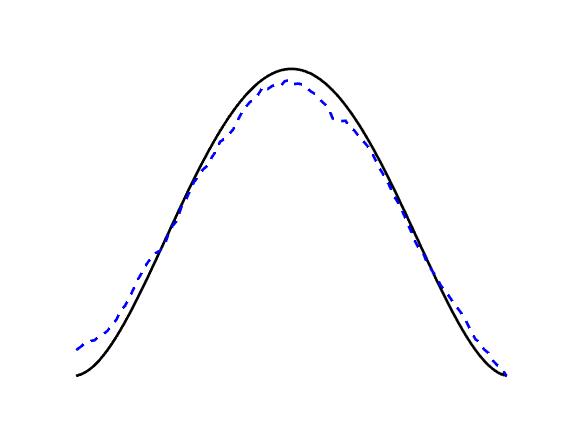}
\includegraphics[width=0.32\textwidth]{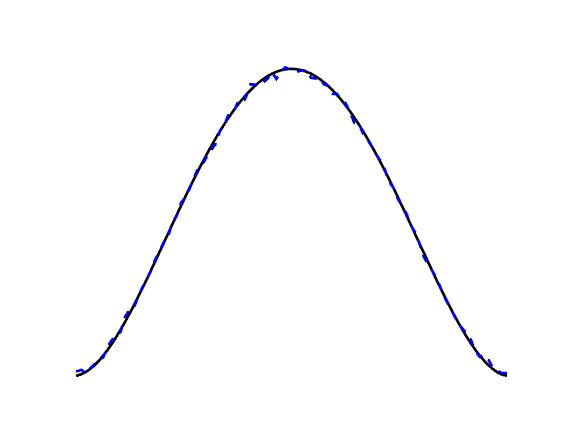}
\\
\hspace*{0.07\textwidth}
$\delta=3\%$	\hspace*{0.21\textwidth}
$\delta=1\%$	\hspace*{0.21\textwidth}
$\delta=0.1\%$	\hspace*{0.21\textwidth}
%$\delta=0$	
%\\
%\end{minipage}
\caption{Noisy data (top row) and reconstructions (bottom row) for $\delta=3\%$ (left column), $\delta=1\%$ (middle column), $\delta=0.1\%$ (right column)
\label{fig_data_recons} }
\end{figure}
In figure \ref{fig_convhist} we plot error and residual, as well as the number of solved nonconvex quadratic subproblems over the radius, for noise levels $1$ and $3$ per cent, during the iteration over $\rho$.
\begin{figure}
\includegraphics[width=0.49\textwidth]{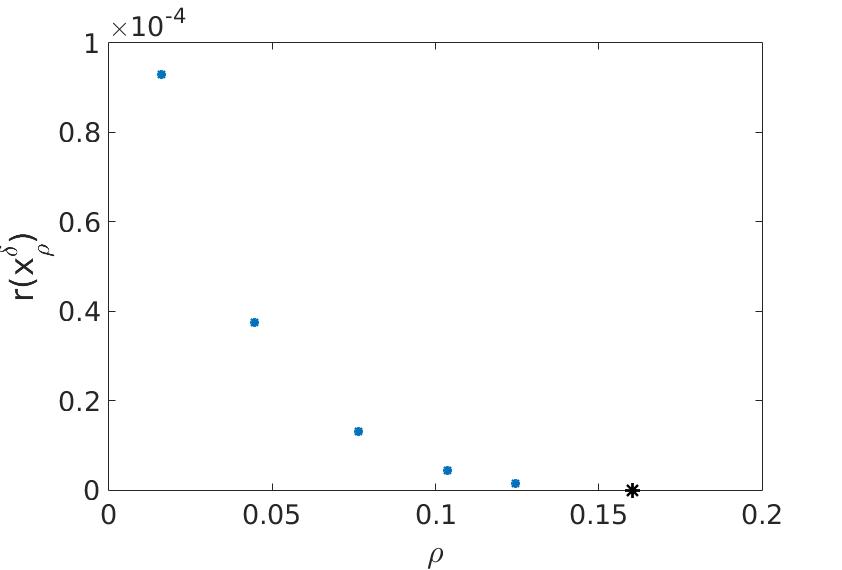}
\includegraphics[width=0.49\textwidth]{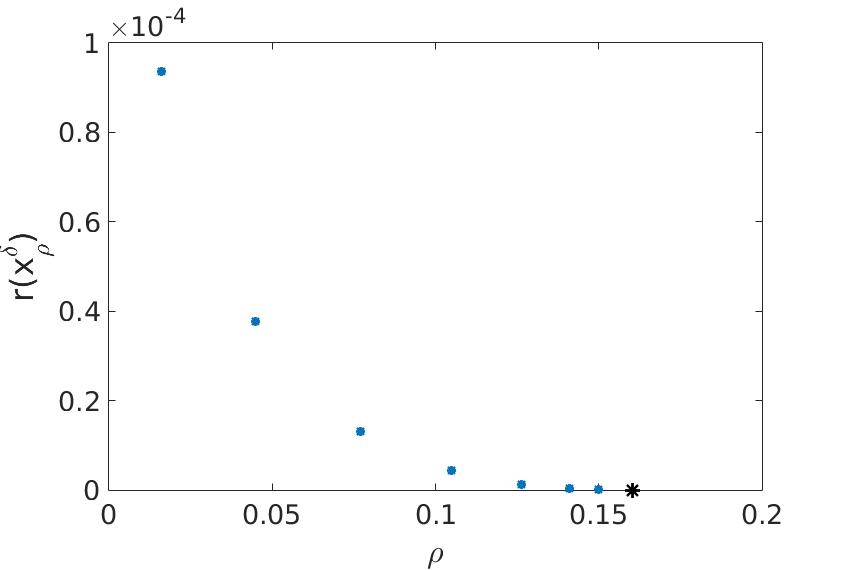}\\
\includegraphics[width=0.49\textwidth]{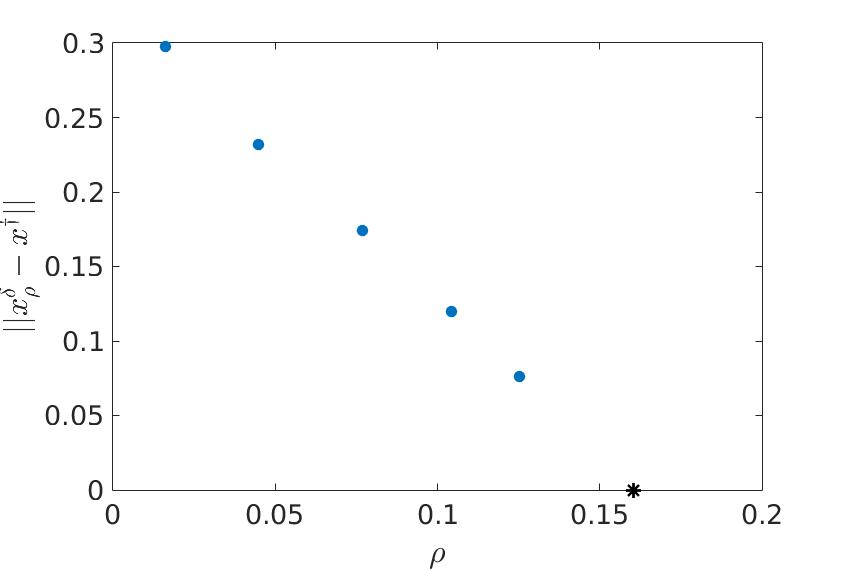}
\includegraphics[width=0.49\textwidth]{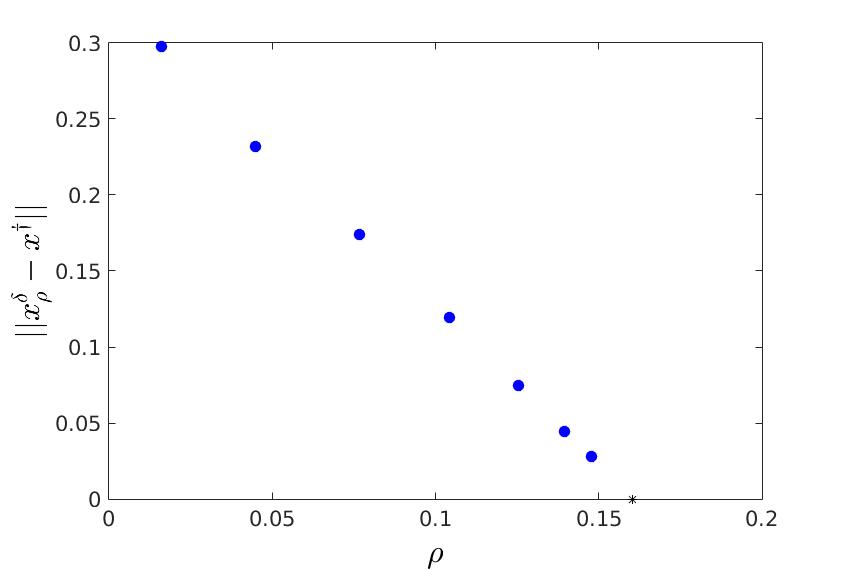}
\caption{Residuals (top row) and errors (bottom row) for $\delta=3\%$ (left column) and $\delta=1\%$ (right column).
\label{fig_convhist} }
\end{figure}
Figure \ref{fig_convhist1} illustrates that condition \eqref{posdef_r} remains valid throughout the iteration over $\rho$, while several nonconvex trust region subproblems have to be solved (the number being smaller for  large $\delta$ since less iterations are carried out in that case).
\begin{figure}
\includegraphics[width=0.49\textwidth]{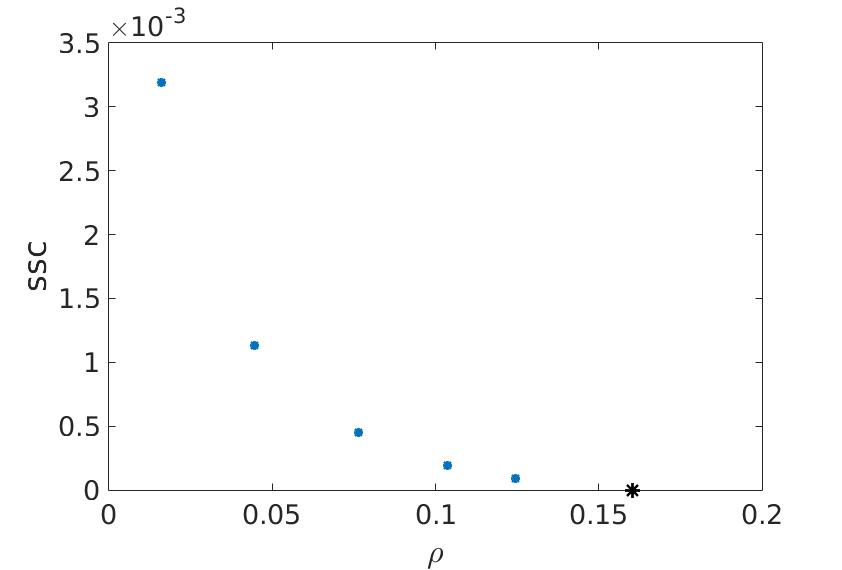}
\includegraphics[width=0.49\textwidth]{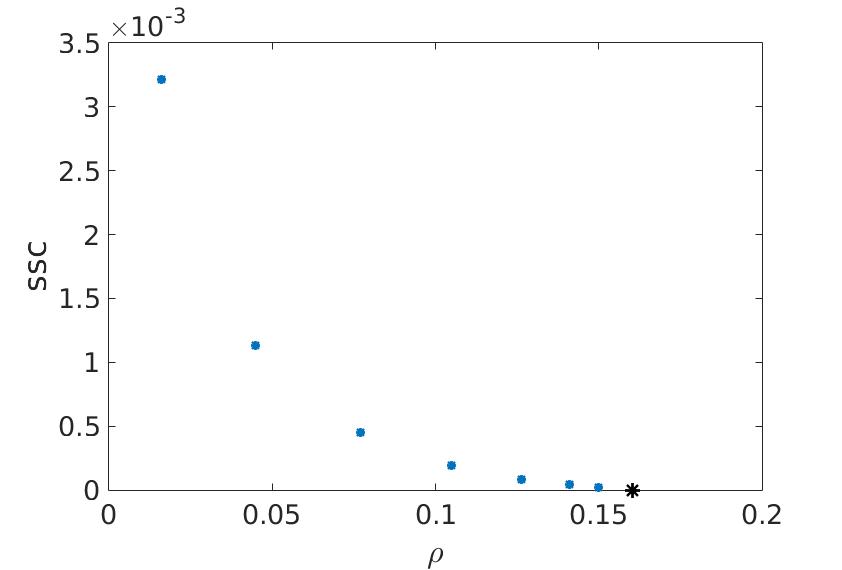}\\
\includegraphics[width=0.49\textwidth]{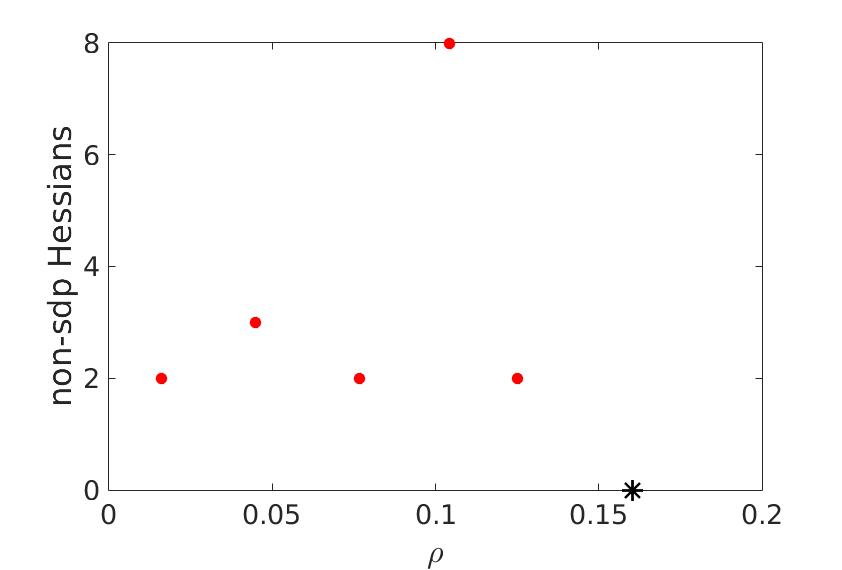}
\includegraphics[width=0.49\textwidth]{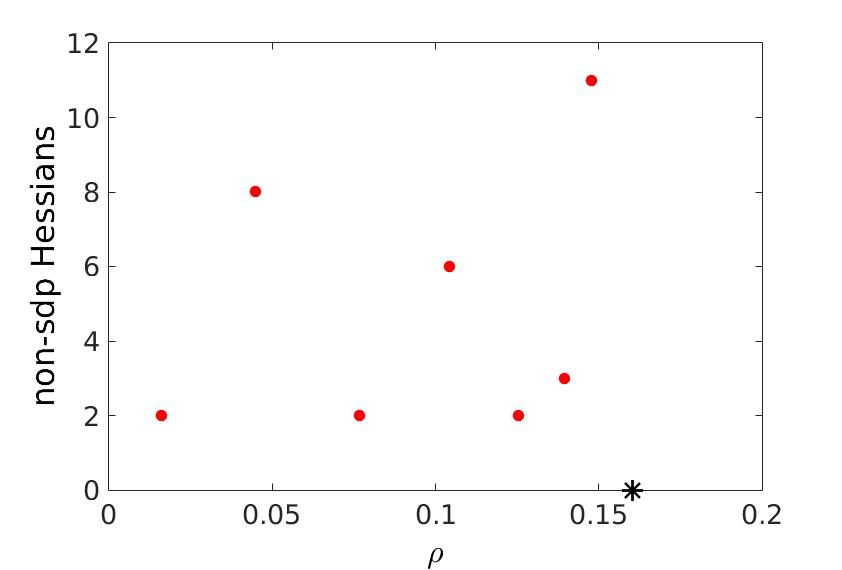}
\caption{Smallest eigenvalue of Hessian of Lagrangian (top row) and number of nonconvex quadratic subproblems (bottom row) for $\delta=3\%$ (left column) and $\delta=1\%$ (right column).
\label{fig_convhist1} }
\end{figure}
%As a real world example, consider identification of the magnetic reluctivity in nonlinear magnetics \cite{KaKaRe03}.

\bibliographystyle{siam}
\bibliography{lit_IvanovTRS}

\begin{thebibliography}{10}

\bibitem{CKK15}
{\sc C.~Clason, B.~Kaltenbacher, and A.~Klassen}, {\em {On convergence and
  convergence rates for Ivanov and Morozov regularization}},  (2015).
\newblock in preparation.

\bibitem{GrodzevichWolkowicz09}
{\sc O.~Grodzevich and H.~Wolkowicz}, {\em Regularization using a parameterized
  trust region subproblem}, Math. Program., Ser. B, 116 (2009), pp.~193--220.

\bibitem{HankeLevMar}
{\sc M.~Hanke}, {\em {A regularization Levenberg--Marquardt scheme, with
  applications to inverse groundwater filtration problems}}, Inverse Problems,
  13 (1997), pp.~79--95.

\bibitem{Ivanov62}
{\sc V.~K. Ivanov}, {\em On linear problems which are not well-posed}, Dokl.
  Akad. Nauk SSSR, 145 (1962), pp.~270--272.

\bibitem{Ivanov63}
\leavevmode\vrule height 2pt depth -1.6pt width 23pt, {\em On ill-posed
  problems}, Mat. Sb. (N.S.), 61 (103) (1963), pp.~211--223.

\bibitem{IvanovVasinTanana02}
{\sc V.~K. Ivanov, V.~V. Vasin, and V.~P. Tanana}, {\em Theory of Linear
  Ill-posed Problems and Its Applications}, Inverse and ill-posed problems
  series, VSP, 2002.

\bibitem{KKV11}
{\sc B.~Kaltenbacher, A.~Kirchner, and B.~Vexler}, {\em Adaptive
  discretizations for the choice of a {T}ikhonov regularization parameter in
  nonlinear inverse problems}, Inverse Problems, 27 (2011), p.~125008.

\bibitem{LorenzWorliczek13}
{\sc D.~Lorenz and N.~Worliczek}, {\em Necessary conditions for variational
  regularization schemes}, Inverse Problems, 29 (2013), p.~075016.

\bibitem{NeubauerRamlau14}
{\sc A.~Neubauer and R.~Ramlau}, {\em {On convergence rates for quasi-solutions
  of ill-posed problems.}}, ETNA, Electron. Trans. Numer. Anal., 41 (2014),
  pp.~81--92.

\bibitem{RendlWolkowicz97}
{\sc F.~Rendl and H.~Wolkowicz}, {\em A semidefinite framework for trust region
  subproblems with applications to large scale minimization}, Math. Program.,
  77 (1997), pp.~273--299.

\bibitem{SeidmanVogel89}
{\sc T.~I. Seidman and C.~R. Vogel}, {\em Well posedness and convergence of
  some regularisation methods for non-linear ill posed problems}, Inverse
  Problems, 5 (1989), p.~227.

\bibitem{Sorensen82}
{\sc D.~Sorensen}, {\em Newton’s method with a model trust region
  modification}, SIAM J. Numer.Anal., 19 (1982), pp.~409--426.

\bibitem{Vogel90}
{\sc C.~R. Vogel}, {\em A constrained least squares regularization method for
  nonlinear iii-posed problems}, SIAM Journal on Control and Optimization, 28
  (1990), pp.~34--49.

\end{thebibliography}

\end{document}